        \newtheorem{lemma}{Lemma}[section]
        \newtheorem{theorem}[lemma]{Theorem}
        \newtheorem{definition}{Definition}[section]
        \newtheorem{remark}[lemma]{Remark}
        \newtheorem{ex}[lemma]{Example}
\newtheorem{claim}[lemma]{Claim}
\numberwithin{equation}{section}
\title{\bf{Uniqueness for the electrostatic inverse boundary value problem with piecewise constant anisotropic conductivities}}
\author{Giovanni Alessandrini\thanks{Dipartimento di Matematica e Geoscienze, Universit\`{a} di Trieste, Italy. Email:alessang@units.it.}\qquad Maarten V. de Hoop\thanks{Departments of Computational and Applied Mathematics, Earth Science, Rice University, Houston, Texas, USA. Email:mdehoop@rice.edu}\qquad\\ Romina Gaburro\thanks{Department of Mathematics and Statistics, University of Limerick, Ireland.  Email: romina.gaburro@ul.ie}}
\date{\today}
\begin{document}
\maketitle

\begin{abstract}
We discuss the inverse problem of determining the, possibly anisotropic, conductivity of a body  $\Omega\subset\mathbb{R}^{n}$ when the so-called Neumann-to-Dirichlet map is locally given on a non empty curved portion $\Sigma$ of the boundary $\partial\Omega$. We prove that anisotropic conductivities that are \textit{a-priori} known to be piecewise constant matrices on a given partition of $\Omega$ with curved interfaces can be uniquely determined in the interior from the knowledge of the local Neumann-to-Dirichlet map.
\end{abstract}

\section{Introduction}\label{sec1}
\setcounter{equation}{0}
The inverse problem of recovering the conductivity of a body by taking measurements of voltage and current on its surface is studied in the present paper. More specifically, the case when the conductivity is anisotropic and it is \textit{a-priori} known to be a piecewise-constant matrix on a given partition of a domain (the body under investigation) is considered.  It is well-known that in absence of internal sources, the electrostatic potential $u$ in a conducting body, described by a domain $\Omega\subset{\mathbb R}^n$, is governed by the elliptic equation

\begin{equation}\label{eq conduttivita'}
\mbox{div}(\sigma\nabla{u})=0\qquad\mbox{in}\quad\Omega,
\end{equation}

where the symmetric, positive definite matrix $\sigma=\sigma(x)$, $x\in\Omega$ represents the (possibly anisotropic) electric conductivity. The inverse conductivity problem consists of finding $\sigma$ when the so called Dirichlet-to-Neumann (D-N) map
$$
\Lambda_{\sigma}:u\vert_{\partial\Omega}\in{H}^{\frac{1}{2}}(\partial\Omega)
\longrightarrow{\sigma}\nabla{u}\cdot\nu\vert_{\partial\Omega}\in{H}^{-\frac{1}{2}}(\partial\Omega)
$$
is given for any $u\in{H}^{1}(\Omega)$ solution to (1.1). Here, $\nu$ denotes the unit outer normal to $\partial\Omega$. If measurements can be taken only on one portion $\Sigma$ of $\partial\Omega$, then the relevant map is called the local D-N map ($\Lambda_{\sigma}^{\Sigma}$).


Different materials display different electrical properties, so that a map
of the conductivity $\sigma(x)$, $x\in\Omega$ can be used to investigate internal properties of
$\Omega$. This problem has many important applications in fields such as
geophysics, medicine and non--destructive testing of materials.
The first mathematical formulation of the inverse conductivity problem is due to A. P. Calder\'{o}n \cite{C},  where he addressed the problem of whether it is possible to determine the (isotropic) conductivity $\sigma = \gamma I$ by the D-N map. \cite{C} opened the way to the solution to the uniqueness issue where one is asking whether $\sigma$ can be determined by the knowledge of $\Lambda_{\sigma}$
(or $\Lambda_{\sigma}^{\Sigma}$ in the case of local measurements). We introduce the following function spaces

\begin{equation*}
_{0}H^{\frac{1}{2}}(\partial \Omega)=\left\{f\in
H^{\frac{1}{2}}(\partial \Omega)\vert\:\int_{\partial\Omega}f\:
=0\right\},
\end{equation*}
\begin{equation*}
_{0}H^{-\frac{1}{2}}(\partial \Omega)=\left\{\psi\in
H^{-\frac{1}{2}}(\partial \Omega)\vert\:\langle\psi,\:1\rangle=0
\right\}.
\end{equation*}

Observe that the D-N map $\Lambda_{\sigma}$ maps onto $_{0}H^{-\frac{1}{2}}(\partial \Omega)$, and, when restricted to $_{0}H^{\frac{1}{2}}(\partial\Omega)$, it is injective with bounded inverse called the Neumann-to-Dirichlet (N-D) map. The precise definitions of the D-N, N-D and its local version will be given in section 2. For now, we simply recall that the N-D map associates to specified current densities supported
on a portion $\Sigma\subset\partial\Omega$ the corresponding boundary voltages, also measured on the same portion $\Sigma$ of $\partial\Omega$ and that, mainly for the applications of the inverse conductivity problem to the direct-current (DC) resistivity method that we have in mind, the choice of taking the surface measurements by means of the (local) N-D map over the (local) D-N map seems to be more appropriate.

The case when measurements can be taken all over the boundary has been studied extensively in the past and fundamental papers like \cite{Ko-V1}, \cite{Ko-V2},  \cite{Sy-U} , \cite{N} and \cite{A} show that the isotropic case can be considered solved. More recently these uniqueness results have been extended in dimension $n \geq 3$ for conductivities in $C^1$ \cite{Ha-T}, for Lipschitz conductivities \cite{Ca-R} and for conductivities in $W^{s,p}(\Omega)\nsubseteq W^{1,\infty}(\Omega)$ \cite{Ha}, by assuming full boundary data. The original uniqueness result by Sylvester and Uhlmann \cite{Sy-U} required the conductivity to be $C^{\infty}$. For the two-dimensional case we refer to \cite{Bro-U} and the breakthrough paper \cite{As-P} where uniqueness has been proven for conductivities that are merely $L^{\infty}$. We wish to recall the uniqueness results of Druskin who, independently from Calder\'{o}n, dealt directly with the geophysical setting of the problem in \cite{D1}-\cite{D3} and that, in particular, the uniqueness result obtained in \cite{D2} was for conductivities described by piecewise constant functions (see also \cite{A-V}). In the present paper, we consider conductivities that are piecewise constant matrices. We refer to \cite{Bo}, \cite{C-I-N} and \cite{U} for an overview regarding the issues of uniqueness and reconstruction of the conductivity.

The problem of recovering the conductivity $\sigma$ by local measurements has been treated more recently. Lassas and Uhlmann in \cite{La-U} recovered a connected compact real-analytic Riemannian manifold $(M,\:g)$ with boundary by making use of the Green's function of the Laplace-Beltrami operator $\Delta_{g}$. See also \cite{La-U-T}. For the procedure of reconstructing the conductivity at the boundary by local measurements we refer to \cite{Bro}, \cite{NaT1}, \cite{NaT2}, \cite{K-Y}. An overview on reconstructing formulas of the conductivity and its normal derivative can be found in \cite{NaT3}. For related results of uniqueness in the interior in the case of local boundary data, we refer to Bukhgeim and Uhlmann \cite{B-U}, Kenig, Sj\"ostrand and  Uhlmann \cite{Ke-S-U} and Isakov \cite{Is}, and, for stability, Heck and Wang \cite{He-W}. Results of stability for cases of piecewise constant conductivities and local boundary maps have also been obtained in \cite{A-V}, \cite{Be-Fr} and \cite{D}.\\

On the other hand the anisotropic case is still open. Since Tartar's observation \cite{Ko-V1} that any diffeomorphism of $\Omega$ which keeps the boundary points fixed has the property of leaving the D-N map unchanged, whereas $\sigma$ is modified, different lines of research have been pursued. One direction has been to find the conductivity up to a diffeomorphism which keeps the boundary fixed (see \cite{L-U}, \cite{Sy}, \cite{N}, \cite{La-U}, \cite{La-U-T}, \cite{Be} and \cite{As-La-P}).  Another direction has been the one to formulate suitable \textit{a-priori} assumptions (possibly fitting some real life physical context) which constrain the structure of the unknown anisotropic conductivity. For instance, one can formulate the hypothesis that the directions of anisotropy are known while some scalar space dependent parameter is not, along this line of reasoning we mention \cite{Ko-V1}, \cite{A}, \cite{A-G}, \cite {A-G1}, \cite{G-L}, \cite{G-S} and \cite{L}. The case when $n=2$ and the anisotropic conductivity is assumed to be divergence free has been treated in \cite{A-Cab}.

Here we follow this second direction by \textit{a-priori} assuming that the conductivity is piecewise constant in a known finite partition of the domain, whereas the constant, matrix-valued, conductivities in each subdomain are unknown. An additional (apparently necessary) assumption that we pose is that contiguous subdomains of the partition can be joined by \textit{curved} smooth surfaces and also that the boundary portion $\Sigma$ where measurements are collected also contains a \textit{curved} portion of a surface. Under such assumptions we show, Theorem \ref{teorema principale}, that a local boundary map uniquely determines the conductivity, also in the interior. For the sake of concreteness we focus our analysis on the local N-D map. But it will be evident from the proof that also other choices of the boundary maps could be treated.

Let us outline the underlying ideas in our approach. As is well-known, \cite{B-G-M}, \cite{U}, the solutions to equation \eqref{eq conduttivita'} are the harmonic functions on the Riemannian manifold $\left\{\Omega,g\right\}$ where the metric $g$ is linked to the conductivity $\sigma$ through the relation

\begin{equation*}
g = \left(\det\sigma\right)^{\frac{1}{n-2}}\sigma^{-1}.
\end{equation*}

We shall obtain, Lemma \ref{lemma tau tangential}, that, under few regularity assumptions, from the knowledge of the local N-D map near a point $P \in \partial \Omega$, one can uniquely determine the tangential part of $g(P)$, that is the $(n-1)\times(n-1)$ minor of $g(P)$ relative to the tangent (hyper)plane to $\partial \Omega$ at $P$. Next, if the local N-D map in known on a non-flat portion $\Sigma$ of $\partial \Omega$ and $\sigma$ is constant nearby, then we have enough different tangent planes to completely recover $g$, and hence $\sigma$, Lemma \ref{lemma sigma constant}. The proof is then completed by an iteration argument and by the use of the unique continuation property.

Finally, in Example \ref{example}, which is a variation of the celebrated Tartar's example, \cite{Ko-V1}, we show that the N-D map for the half space is \textit{not} sufficient to uniquely determine a \textit{constant} anisotropic conductivity. Thus, this example provides a strong indication that indeed, flat boundary and interfaces may constitute an obstruction to uniqueness and thus our assumptions on curved interfaces and boundary are well-motivated. For other kinds of examples of nonuniqueness we may refer to \cite{G-L-U1} and \cite{G-L-U2}. \\

As early as 1920, Conrad Schlumberger \cite{Sc} recognized that anisotropy may affect geological formations' DC electrical properties. Anisotropic effects when measuring electromagnetic fields in geophysical applications have been studied ever since. From an inverse problems perspective, it is interesting that Maillet and Doll \cite{M-Do} already identified obstructions to recovering an anisotropic resistivity from (boundary) data.

Individual minerals are typically anisotropic but rocks composed of them can appear to be isotropic. Simpson and Tommasi \cite{Si-T} discussed the application of effective medium models to calculate the (degree of) anisotropy in electrical conductivity in an aggregate with non-random crystallographic orientations. In fact, there are many heterogeneous material configurations in Earth's sedimentary basins that possibly lead to anisotropy \cite{Ne-S}. It might be that there are some preferred directions in the subsurface rocks, or some preferred orientation of grains in the sediments. Fine layering or a pronounced strike direction can lead to an effective anisotropy. For example, alternations of sandstone and shales can cause hydrocarbon reservoir anisotropy, but anisotropy in shale-free sandstones can occur as well \cite{Ken-H}. Resistivity anisotropy has also been measured in volcanic reservoir rock \cite{No}.

In porous rocks, one simple equation that gives a relationship between their resistivities and the containing fluid saturation factor is Archie's law \cite{Ar}. This law is applicable for certain types of rocks and sediments, particularly those that have a low clay content. On the one hand, resistive fluids (hydrocarbons) displacing conductive ones (water) increase resistivity anisotropy in shaly rocks with the shale taking over the electrical conduction. On the other hand, anisotropy in Archie's law (through its parameters, see, for example, \cite{S-P-Lo}) is significant because permeability anisotropy can follow from it. That is, a formation factor can be extracted from Archie's law that can be anisotropic implying anisotropy in permeability through the tortuosity. In this context, we mention the work of Worthington \cite{Wo}.

In view of practical constraints on the data acquisition, DC resistivity methods are limited to probing Earth's (upper) crust. Resolving conductive structures to depths of the upper mantle requires magnetotelluric (MT) data. The analysis of the MT inverse boundary value problem associated with the low-frequency Maxwell equations will be presented in a separate paper. Most minerals in Earth's deeper interior (lower crust, upper mantle and transition zone) have been shown to have anisotropic conductivities that are sensitive not only to temperature, but also to hydrogen (water) content, major element chemistry and oxygen fugacity \cite{Ka-W}. Consequently, there is a potential to infer the distribution of these chemical factors (as well as temperature) from the study of electrical conductivities. Here, the influence of partial melting~\footnote{Melts in general have higher electrical conductivity than minerals. This is essentially due to the high diffusion coefficients of charged species in melts \cite{Ho}. As a consequence, the presence of partial melt will contribute to relatively high electrical conductivity.}  needs to be accounted for. Indeed, to infer the water distribution in Earth's mantle, electrical conductivity plays a primary role \cite{Ka}~\footnote{Hydrogen (water) has an important influence on rheological properties \cite{Ka-J} and melting relationship (\cite{Ku-Syo-Ak}, \cite{I}) that control the dynamics and evolution of our planet.}.


Many of the studies of anisotropy in as much as the solutions of the boundary value problem, in dimension three, and their probing capabilities are concerned, have been restricted to electrical conductivities (or resistivities) that are piecewise constant while plane layers form the subdomains in a domain partition of a half space. That is, flat interfaces separate the subdomains. Yin and Weidelt \cite{Y-We} considered arbitrary anisotropy for the DC-resistivity method in layered media.\\

The paper is organized as follows. Our main assumptions and our main result (Theorem \ref{teorema principale}) are contained in section 2, whereas section 3 contains some preliminary results. The proof of Theorem \ref{teorema principale}, that is, the proof of the unique determination of the piecewise constant anisotropic conductivity from the knowledge of the local N-D map, is contained in section 4. It should also be emphasized that the consideration of the local N-D map, rather than the local D-N map, is motivated by the application of this inverse problem to the DC resistivity method in geophysical prospection that we have in mind.


\section{Main Result}\label{sec2}
\setcounter{equation}{0}
\subsection{Notation and definition}

In several places in this manuscript it will be useful to single out one coordinate
direction. To this purpose, the following notations for
points $x\in \mathbb{R}^n$ will be adopted. For $n\geq 3$,
a point $x\in \mathbb{R}^n$ will be denoted by
$x=(x',x_n)$, where $x'\in\mathbb{R}^{n-1}$ and $x_n\in\mathbb{R}$.
Moreover, given a point $x\in \mathbb{R}^n$,
we shall denote with $B_r(x), B_r'(x)$ the open balls in
$\mathbb{R}^{n}, \mathbb{R}^{n-1}$ respectively centred at $x$ with radius $r$
and by $Q_r(x)$ the cylinder $B_r'(x')\times(x_n-t,x_n+r)$. We shall denote
$\mathbb{R}^n_+=\{(x',x_n)\in \mathbb{R}^n| x_n>0 \}, \
\mathbb{R}^n_-=\{(x',x_n)\in \mathbb{R}^n| x_n<0 \},
B^+_r=B_r\cap\mathbb{R}^n_+, B^-_r=B_r\cap\mathbb{R}^n_-,
Q^+_r=Q_r\cap\mathbb{R}^n_+, Q^{-}_r=Q_r\cap\mathbb{R}^n_-$.

In the sequel, we shall make a repeated use of quantitative
notions of smoothness for the boundaries of various domains. We introduce the following notation and definitions.

\begin{definition}\label{def Lipschitz boundary}
Let $\Omega$ be a domain in $\mathbb R^n$. We say that a portion
$\Sigma$ of $\partial\Omega$ is of Lipschitz class with constants
$r_0,L$ if for any $P\in\partial\Sigma$ there exists a rigid
transformation of $\mathbb R^n$ under which we have $P\equiv0$ and
$$\Omega\cap Q_{r_0}=\{x\in Q_{r_0}\,:\,x_n>\varphi(x')\},$$
where $\varphi$ is a Lipschitz function on $B'_{r_0}$ satisfying
the following condition $\varphi(0)=0$ and
$\|\varphi\|_{C^{0,1}(B'_{r_0})}\leq Lr_0$.

It is understood that $\partial\Omega$ is of Lipschitz class with
constants $r_0,L$ as a special case of $\Sigma$, with
$\Sigma=\partial\Omega$.
\end{definition}

\begin{definition}\label{def C1 alpha boundary}
Let $\Omega$ be a domain in $\mathbb R^n$. Given $\alpha$,
$\alpha\in(0,1)$, we say that a portion $\Sigma$ of
$\partial\Omega$ is of class $C^{1,\alpha}$  if for any $P\in\Sigma$ there exists a rigid transformation of
$\mathbb R^n$ under which we have $P=0$ and
$$\Omega\cap Q_{r_0}=\{x\in Q_{r_0}\,:\,x_n>\varphi(x')\},$$
where $\varphi$ is a $C^{1,\alpha}$ function on $B'_{r_0}$
satisfying
\[\varphi(0)=|\nabla_{x'}\varphi(0)|=0.\]




\end{definition}

\begin{definition}\label{definition D-N}
Given $\Sigma$ as above, we shall say that such a portion of a surface is non-flat if, there exists $P\in\Sigma$ such that, considering the  reference system and the function $\varphi$ as above, we have that $\varphi$ is not identically zero near $P=0$.
\end{definition}
\subsubsection{The Dirichlet-to-Neumann map.}\label{D-to-N}
We start by rigorously defining the D-N map.
We denote by $Sym_n$ the class of $n\times n$ symmetric real valued matrices.
Let $\Omega$ be a domain in $\mathbb{R}^n$ with Lipschitz boundary
$\partial\Omega$ and assume that $\sigma\in
L^{\infty}(\Omega\:,Sym_{n})$ satisfies the ellipticity condition
\begin{eqnarray}\label{ellitticita'sigma}
\lambda^{-1}\vert\xi\vert^{2}\leq{\sigma}(x)\xi\cdot\xi\leq\lambda\vert\xi\vert^{2},
& &for\:almost\:every\:x\in\Omega,\nonumber\\
& &for\:every\:\xi\in\mathbb{R}^{n}.
\end{eqnarray}

We shall also denote by $\langle\cdot,\cdot\rangle$ the
$L^{2}(\partial\Omega)$-pairing between
$H^{\frac{1}{2}}(\partial\Omega)$ and its dual
$H^{-\frac{1}{2}}(\partial\Omega)$.

\begin{definition}\label{def DN}
The Dirichlet-to-Neumann (D-N) map associated with $\sigma$ is the operator
\begin{equation}\label{mappaDN}
\Lambda_{\sigma}:H^{\frac{1}{2}}(\partial\Omega)\longrightarrow
{H}^{-\frac{1}{2}}(\partial\Omega)
\end{equation}

defined by

\begin{equation}\label{def DN locale}
\langle\Lambda_{\sigma}\:f,\:g\rangle\:=\:\int_{\:\Omega}
\sigma(x) \nabla{u}(x)\cdot\nabla\varphi(x)\:dx,
\end{equation}

for any $f$, $g\in H^{\frac{1}{2}}(\partial\Omega)$, where
$u\in{H}^{1}(\Omega)$ is the weak solution to

\begin{displaymath}
\left\{ \begin{array}{ll} \textnormal{div}(\sigma(x)\nabla
u(x))=0, &
\textrm{$\textnormal{in}\quad\Omega$},\\
u=f, & \textrm{$\textnormal{on}\quad{\partial\Omega},$}
\end{array} \right.
\end{displaymath}

and $\varphi\in H^{1}(\Omega)$ is any function such that
$\varphi\vert_{\partial\Omega}=g$ in the trace sense.
\end{definition}

Note that, by \eqref{def DN locale}, it is easily verified that
$\Lambda_{\sigma}$ is selfadjoint. Given $\sigma^{(i)}\in L^{\infty}(\Omega\:,Sym_{n})$, satisfying \eqref{ellitticita'sigma}, for $i=1,2$, we recall Alessandrini's identity (see \cite[(b), p. 253]{A})

\begin{equation}\label{Alessandrini identity}
\langle\left(\Lambda_{\sigma^{(1)}} - \Lambda_{\sigma^{(2)}}\right)f_1,f_2\rangle = \int_{\Omega} \left(\sigma^{(1)}(x) - \sigma^{(2)}(x)\right)\nabla u_1(x)\cdot\nabla u_2(x),
\end{equation}

for any $f_i\in H^{\frac{1}{2}}(\partial\Omega)$,  $i=1,2$ and $u_i\in H^{1}(\Omega)$ being the unique weak solution to the Dirichlet problem

\begin{displaymath}
\left\{ \begin{array}{ll} \textnormal{div}(\sigma^{(i)}(x)\nabla
u_i(x))=0, &
\textrm{$\textnormal{in}\quad\Omega$},\\
u_i=f_i, & \textrm{$\textnormal{on}\quad{\partial\Omega}$}.
\end{array} \right.
\end{displaymath}

We rigorously define now the local N-D map.

\subsubsection{The Neumann-to-Dirichlet map.}\label{N-to-D}
We consider the following function spaces

\begin{equation*}
_{0}H^{\frac{1}{2}}(\partial \Omega)=\left\{f\in
H^{\frac{1}{2}}(\partial \Omega)\vert\:\int_{\partial\Omega}f\:
=0\right\},
\end{equation*}
\begin{equation*}
_{0}H^{-\frac{1}{2}}(\partial \Omega)=\left\{\psi\in
H^{-\frac{1}{2}}(\partial \Omega)\vert\:\langle\psi,\:1\rangle=0
\right\}.
\end{equation*}

As previously observed, the D-N map
$\Lambda_{\sigma}$ maps onto $_{0}H^{-\frac{1}{2}}(\partial \Omega)$, and, when restricted to $_{0}H^{\frac{1}{2}}(\partial
\Omega)$, it is injective with bounded inverse. Then we can define the global Neumann-to-Dirichlet map as follows.

\begin{definition}\label{definition N-D}
The Neumann-to-Dirichlet (N-D) map associated with $\sigma$,

\[\mathcal{N}_{\sigma}:\  _{0}H^{-\frac{1}{2}}(\partial \Omega)\longrightarrow \:_{0}H^{\frac{1}{2}}(\partial \Omega)\]

is given by

\begin{equation}\label{Nsigma}
\mathcal{N}_{\sigma} = \left(\Lambda_{\sigma}|_{_{0}H^{\frac{1}{2}}(\partial
\Omega)} \right)^{-1} .\end{equation}
\end{definition}

Note that $\mathcal{N}_{\sigma}$ can also be characterized as the
selfadjoint operator satisfying
\begin{equation}\label{ND globale}
\langle\psi,\:\mathcal{N}_{\sigma}\psi\rangle\:=\:\int_{\:\Omega} \sigma(x)
\nabla{u}(x)\cdot\nabla{u}(x)\:dx,
\end{equation}
for every $\psi\in\: _{0}H^{-\frac{1}{2}}(\partial \Omega)$, where
$u\in{H}^{1}(\Omega)$ is the weak solution to the Neumann problem

\begin{equation}\label{N bvp}
\left\{ \begin{array}{lll}\displaystyle\textnormal{div}(\sigma\nabla u)=0, &
\textrm{$\textnormal{in}\quad\Omega$},\\
\displaystyle\sigma\nabla u\cdot\nu\vert_{\partial\Omega}=\psi, &
\textrm{$\textnormal{on}\quad{\partial\Omega}$},\\
\displaystyle\int_{\partial\Omega}u\: =0.
\end{array} \right.
\end{equation}

Given $\sigma^{(i)}\in L^{\infty}(\Omega\:,Sym_{n})$, satisfying \eqref{ellitticita'sigma}, for $i=1,2$, the following identity can be recovered from \eqref{Alessandrini identity}

\begin{equation}\label{Alessandrini identity N-D}
\langle\sigma^{(1)}\nabla u_1\cdot\nu,\left(\mathcal{N}_{\sigma^{(2)}} - \mathcal{N}_{\sigma^{(1)}}\right)\sigma^{(2)}\nabla u_2\cdot\nu\rangle = \int_{\Omega} \left(\sigma^{(1)}(x) - \sigma^{(2)}(x)\right)\nabla u_1(x)\cdot\nabla u_2(x),
\end{equation}

for any $u_i\in H^{1}(\Omega)$ weak solution to

\begin{equation}\label{conductivity equations}
\textnormal{div}(\sigma^{(i)}(x)\nabla u_i(x))=0,\qquad\textnormal{in}\quad\Omega,
\end{equation}

for $i=1,2$.\\

Now we introduce the local version of the N-D map. Let  $\Sigma$ be an open portion of $\partial\Omega$ and let
$\Delta=\partial\Omega\setminus\overline\Sigma$. We introduce the subspace of $H^{\frac{1}{2}}(\partial \Omega)$,

\[H^{\frac{1}{2}}_{co}(\Delta)=\left\{f\in H^{\frac{1}{2}}(\partial \Omega)\:|\: \mbox{supp}(f)\subset\Delta\right\}.\]

We denote by $H^{\frac{1}{2}}_{00}(\Delta)$  the closure in $H^{\frac{1}{2}}(\partial\Omega)$ of the space
$H^{\frac{1}{2}}_{co}(\Delta)$ and we introduce

\begin{equation}
_{0}H^{-\frac{1}{2}}(\Sigma)=\left\{\psi\in \:
_{0}H^{-\frac{1}{2}}(\partial\Omega)\vert\:\langle\psi,\:f\rangle=0,\quad\textnormal{for\:any}\:f\in
H^{\frac{1}{2}}_{00}(\Delta)\right\},
\end{equation}
that is the space of distributions $\psi \in
H^{-\frac{1}{2}}(\partial\Omega)$ which are supported in
$\overline\Sigma$ and have zero average on $\partial\Omega$. The local
N-D map is then defined as follows.

\begin{definition}
The local Neumann-to-Dirichlet map associated with $\sigma$,
$\Sigma$ is the operator $\mathcal{N}_{\sigma}^{\Sigma}:\:
_{0}H^{-\frac{1}{2}}(\Sigma)\longrightarrow
\big(_{0}H^{-\frac{1}{2}}(\Sigma)\big)^{\ast}\subset{_{0}H}^{\frac{1}{2}}(\partial\Omega)$
given  by
\begin{equation}
\langle \mathcal{N}_{\sigma}^{\Sigma}\;\varphi,\;\psi\rangle=\langle
\mathcal{N}_{\sigma}\;\varphi,\;\psi\rangle,
\end{equation}

for every $\varphi, \psi\in\: _{0}H^{-\frac{1}{2}}(\Sigma)$.
\end{definition}

Given $\sigma^{(i)}\in L^{\infty}(\Omega\:,Sym_{n})$, satisfying \eqref{ellitticita'sigma}, for $i=1,2$, we also recover from \eqref{Alessandrini identity}

\begin{equation}\label{Alessandrini identity local N-D}
\left<\psi_1,\left(\mathcal{N}_{\sigma^{(2)}}^{\Sigma} - \mathcal{N}_{\sigma^{(1)}}^{\Sigma}\right)\psi_2\right> = \int_{\Omega} \left(\sigma^{(1)}(x) - \sigma^{(2)}(x)\right)\nabla u_1(x)\cdot\nabla u_2(x),
\end{equation}

for any $\psi_i\in _{0}H^{-\frac{1}{2}}(\Sigma)$, for $i=1,2$ and $u_i\in H^{1}(\Omega)$ being the unique weak solution to the Neumann problem

\begin{equation}
\left\{ \begin{array}{lll}\displaystyle\textnormal{div}(\sigma^{(i)}\nabla u_i)=0, &
\textrm{$\textnormal{in}\quad\Omega$},\\
\displaystyle\sigma^{(i)}\nabla u_i\cdot\nu\vert_{\partial\Omega}=\psi_i, &
\textrm{$\textnormal{on}\quad{\partial\Omega}$},\\
\displaystyle\int_{\partial\Omega}u_i\: =0.
\end{array} \right.
\end{equation}



\subsection{The a-priori assumptions}\label{subsection assumptions}
Let $N$, $r_0$, $L$, $M$, $\alpha$, $\lambda$ be given
positive numbers with $N\in\mathbb{N}$, $\alpha\in(0,1)$. We will
refer to this set of numbers, along with the space dimension $n$,
as to the \textit{a-priori data}. For sake of simplicity we only consider $n\geq 3$.

\subsubsection{Assumptions pertaining to the domain partition}\label{subsec assumption domain}

\begin{enumerate}

\item $\Omega\subset\mathbb{R}^n$ is a bounded domain .

\item $\partial\Omega$ is of Lipschitz class.

\item We fix an open non-empty subset $\Sigma$ of $\partial\Omega$ (where the measurements in terms of the local N-D map are taken).

\item \[\overline\Omega = \bigcup_{j=1}^{N}\overline{{D}_j},\]

where $D_j$, $j=1,\dots , N$ are known open sets of
$\mathbb{R}^n$, satisfying the conditions below.

\begin{enumerate}
\item $D_j$, $j=1,\dots , N$ are connected and pairwise
nonoverlapping.

\item $\partial{D}_j$, $j=1,\dots , N$ are of Lipschitz class.

\item There exists one region, say $D_1$, such that
$\partial{D}_1\cap\Sigma$ contains a non flat $C^{1,\alpha}$ portion
$\Sigma_1$.

\item For every $i\in\{2,\dots , N\}$ there exists $j_1,\dots ,
j_K\in\{1,\dots , N\}$ such that

\begin{equation}\label{catena dominii}
D_{j_1}=D_1,\qquad D_{j_K}=D_i
\end{equation}

and such that for every $i=1,\dots , K$

\[\left(\bigcup_{k=1}^{i} \overline{D_{j_k}}\right)^{\circ}\quad\textnormal{ and}\quad \Omega\setminus\left(\bigcup_{k=1}^{i} \overline{D}_{j_k}\right)\]

are Lipschitz domains.

In addition we assume that, for every $k=1,\dots , K$,
$\partial{D}_{j_k}\cap \partial{D}_{j_{k-1}}$ contains a non flat
$C^{1,\alpha}$ portion $\Sigma_k$ (for the time being we agree that
$D_{j_0}=\mathbb{R}^n\setminus\Omega$), such that

\[\Sigma_1\subset\Sigma,\]

\[\Sigma_k\subset\Omega,\quad\mbox{for\:every}\:k=2,\dots , K,\]

and, for every $k=1,\dots , K$, there exists $P_k\in\Sigma_k$ and
a rigid transformation of coordinates under which we have $P_k=0$
and

\begin{eqnarray}
\Sigma_k\cap{Q}_{r_{0}/3} &=&\{x\in
Q_{r_0/3}|x_n=\varphi_k(x')\}\nonumber\\
D_{j_k}\cap {Q}_{r_{0}/3} &=&\{x\in
Q_{r_0/3}|x_n>\varphi_k(x')\}\nonumber\\
D_{j_{k-1}}\cap {Q}_{r_{0}/3} &=&\{x\in
Q_{r_0/3}|x_n<\varphi_k(x')\},
\end{eqnarray}

where $\varphi_k$ is a non flat $C^{1,\alpha}$ function on $B'_{r_o/3}$
satisfying

\[\varphi_k(0)=|\nabla\varphi_k(0)|=0.\]


\end{enumerate}
\end{enumerate}

\subsubsection{Assumption pertaining to the conductivity}

We assume that the conductivity $\sigma$ is of type

\begin{equation}\label{a priori info su sigma}
\sigma(x)=\sum_{j=1}^{N}\sigma_{j}\chi_{D_j}(x),\qquad
x\in\Omega,
\end{equation}

where $\sigma_{j}\in Sym_n$ are positive definite constant matrices, satisfying the uniform ellipticity condition

\begin{equation}\label{unifellip}
{\lambda}^{-1}|\xi|^2\le \sigma_j\xi\cdot\xi\le \lambda|\xi|^2 , \qquad\mbox{for every}\ \xi\in\mathbb{R}^n,
\end{equation}

for $j=1,\dots ,N$, and $D_j$, $j=1,\dots ,N$ are the subdomains introduced in section \ref{subsec assumption
domain} .


Our main result is stated below.

\begin{theorem}\label{teorema principale}
Let $\Omega$, $D_j$, $j=1,\dots , N$ and $\Sigma$ be a domain, $N$ subdomains of $\Omega$ and a portion of $\partial\Omega$ as in section \ref{subsec assumption domain} respectively and let $\sigma^{(i)}$, $i=1,2$ be two conductivities of type

\begin{equation}\label{conduttivita anisotrope}
\sigma^{(i)}(x)=\sum_{j=1}^{N}\sigma_{j}^{(i)}\chi_{D_j}(x)\qquad
x\in\Omega,\:i=1,2,
\end{equation}

where $\sigma_{j}^{(i)}\in Sym_n$ are positive definite constant matrices, satisfying the uniform ellipticity condition \eqref{unifellip}, for $j=1,\dots , N$. If

\[\mathcal{N}^{\Sigma}_{\sigma^{(1)}}=\mathcal{N}^{\Sigma}_{\sigma^{(2)}},\]

then

\begin{equation}\label{global uniqueness1}
\sigma^{(1)}=\sigma^{(2)},\qquad\textnormal{in}\quad\Omega.
\end{equation}

\end{theorem}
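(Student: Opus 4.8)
The strategy is to exploit the two preparatory results mentioned in the introduction, Lemma \ref{lemma tau tangential} (local N-D data near a boundary point determine the tangential minor of the metric $g$, equivalently of $\sigma$, at that point) and Lemma \ref{lemma sigma constant} (if $\sigma$ is constant near a non-flat $C^{1,\alpha}$ portion of the boundary on which the local N-D map is known, then that constant matrix is determined), together with an induction over the chain of subdomains provided by assumption 4(d) and the unique continuation property for the elliptic equation \eqref{eq conduttivita'}. Throughout, I would use Alessandrini's identity in the local N-D form \eqref{Alessandrini identity local N-D}, which reduces the hypothesis $\mathcal{N}^{\Sigma}_{\sigma^{(1)}}=\mathcal{N}^{\Sigma}_{\sigma^{(2)}}$ to the vanishing of $\int_\Omega(\sigma^{(1)}-\sigma^{(2)})\nabla u_1\cdot\nabla u_2$ for all pairs of solutions.

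\textbf{Base step.} Apply Lemma \ref{lemma sigma constant} on the non-flat $C^{1,\alpha}$ portion $\Sigma_1\subset\partial D_1\cap\Sigma$: since both conductivities are constant ($=\sigma_1^{(i)}$) on $D_1$, a neighbourhood of $\Sigma_1$ in $\Omega$, and the local N-D maps agree on $\Sigma\supset\Sigma_1$, we conclude $\sigma_1^{(1)}=\sigma_1^{(2)}=:\sigma_1$. Hence $u_1\equiv u_2$ can be forced to coincide near $\Sigma_1$ by solving the same Cauchy problem, and more importantly $\sigma^{(1)}-\sigma^{(2)}$ vanishes on $D_1$.

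\textbf{Inductive step.} Suppose along the chain $D_{j_1}=D_1,\dots,D_{j_K}=D_i$ of assumption 4(d) we have already shown $\sigma^{(1)}_{j_k}=\sigma^{(2)}_{j_k}$ for $k=1,\dots,m-1$, so that $\sigma^{(1)}=\sigma^{(2)}$ on the Lipschitz domain $\Omega_{m-1}:=\big(\bigcup_{k=1}^{m-1}\overline{D_{j_k}}\big)^\circ$. I want to reach $D_{j_m}$. First, by the unique continuation property, any solution $u^{(i)}$ of $\mathrm{div}(\sigma^{(i)}\nabla u^{(i)})=0$ in $\Omega$ with the same Cauchy data on $\Sigma_1$ agrees with a single harmonic-type function on the connected open set $\Omega_{m-1}$; in particular the Cauchy data of a common solution $u$ are transported to the interface portion $\Sigma_m\subset\partial D_{j_m}\cap\partial D_{j_{m-1}}$, which by assumption 4(d) is a non-flat $C^{1,\alpha}$ portion with the normalized graph representation \eqref{catena dominii}-type local chart around $P_m$. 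Now view $\Sigma_m$ as a ``boundary'' portion for the subdomain $\Omega\setminus\overline{\Omega_{m-1}}$ (a Lipschitz domain by 4(d)), on the other side of which $\sigma^{(1)},\sigma^{(2)}$ are the constants $\sigma^{(1)}_{j_m},\sigma^{(2)}_{j_m}$. The local Cauchy data on $\Sigma_m$ are determined (they come from the common solution on $\Omega_{m-1}$), so the local N-D / D-N information on $\Sigma_m$ for the two constant media $\sigma^{(i)}_{j_m}$ coincide; applying Lemma \ref{lemma tau tangential} at every point of $\Sigma_m$ gives equality of all tangential minors of $\sigma^{(1)}_{j_m}$ and $\sigma^{(2)}_{j_m}$ relative to the tangent hyperplanes $T_P\Sigma_m$, $P\in\Sigma_m$. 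Because $\Sigma_m$ is non-flat (its defining $\varphi_m$ is not identically zero near $P_m=0$, and $\nabla\varphi_m(0)=0$), these tangent hyperplanes span a full-dimensional family of directions, and a constant symmetric matrix is uniquely pinned down by its tangential minors on such a family (this is precisely the content of Lemma \ref{lemma sigma constant}); hence $\sigma^{(1)}_{j_m}=\sigma^{(2)}_{j_m}$. This closes the induction, and since assumption 4(d) guarantees every index $i\in\{2,\dots,N\}$ is reached by such a chain, we obtain $\sigma^{(1)}_j=\sigma^{(2)}_j$ for all $j$, i.e.\ \eqref{global uniqueness1}.

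\textbf{Main obstacle.} The delicate point is the interior step: justifying that the agreement of the \emph{global} local N-D map on $\Sigma$ propagates, via Alessandrini's identity and unique continuation, into \emph{genuine local Cauchy-data information on the buried interface} $\Sigma_m$ that can feed Lemma \ref{lemma tau tangential}. One must (i) show $\sigma^{(1)}=\sigma^{(2)}$ on $\Omega_{m-1}$ forces the respective solutions, suitably chosen, to have identical Cauchy data across $\Sigma_m$ — this requires the unique continuation property up to the (merely Lipschitz, but locally $C^{1,\alpha}$ near $\Sigma_m$) interface and a careful use of the fact that $\Omega_{m-1}$ and its complement are both Lipschitz domains, so that traces and conormal derivatives on $\Sigma_m$ make sense and match; and (ii) verify that the non-flatness of $\Sigma_m$ yields \emph{enough} distinct tangent planes — quantitatively, that the Gauss map of a non-flat $C^{1,\alpha}$ graph with vanishing gradient at the base point has image with nonempty interior in the sphere, so the linear-algebra reconstruction of a constant matrix from tangential minors (Lemma \ref{lemma sigma constant}) applies. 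Everything else — the reduction via \eqref{Alessandrini identity local N-D}, the bookkeeping of the chains, the Lipschitz-domain gluing in 4(d) — is routine once these two ingredients are in place.
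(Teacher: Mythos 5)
Your overall architecture (base step on $\Sigma_1$ via Lemma \ref{lemma sigma constant}, induction along the chains of assumption 4(d), use of non-flatness of each $\Sigma_k$, final appeal to Lemma \ref{lemma sigma constant} at each buried interface) matches the paper. The gap is in the mechanism you propose for the inductive step. You argue: since $\sigma^{(1)}=\sigma^{(2)}$ on $\Omega_{m-1}$ and the Cauchy data of solutions agree on $\Sigma_1$, unique continuation forces $u^{(1)}=u^{(2)}$ on $\Omega_{m-1}$, hence their Cauchy data agree on $\Sigma_m$, hence ``the local N-D information on $\Sigma_m$ for the two media coincide.'' This last inference does not follow. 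What Lemma \ref{lemma sigma constant} needs on $\Sigma_{K+1}$ is the local N-D map of the \emph{residual domain} $E=\Omega\setminus\overline{D}$, i.e.\ the Dirichlet traces of solutions of the Neumann problem in $E$ with \emph{arbitrary} Neumann data supported on $\Sigma_{K+1}$. The solutions you control — restrictions to $E$ of global solutions in $\Omega$ driven from $\Sigma$ — produce only a particular (and not obviously dense or even characterizable) subset of Cauchy pairs on $\Sigma_m$; matching Cauchy data for those does not give you $\mathcal{N}^{\Sigma_{K+1}}_{\sigma^{(1)}}=\mathcal{N}^{\Sigma_{K+1}}_{\sigma^{(2)}}$, nor does it directly feed the pointwise asymptotic argument of Lemma \ref{lemma tau tangential}, which requires knowledge of the Neumann kernel of $E$ on $\Sigma_{K+1}\times\Sigma_{K+1}$.

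The paper closes exactly this gap with its Claim \ref{claim maps}, and the technique is different from what you sketch: one augments $\Omega$ by an exterior bump $D_0$, introduces modified Neumann kernels $\widetilde N^{(i)}(\cdot,y)$ with poles $y$ that can be placed \emph{outside} $\overline\Omega$, and forms
\begin{equation*}
S(y,z)=\int_{E}\bigl(\sigma^{(1)}(x)-\sigma^{(2)}(x)\bigr)\nabla_x\widetilde N^{(1)}(y,x)\cdot\nabla_x\widetilde N^{(2)}(z,x)\,dx .
\end{equation*}
Alessandrini's identity \eqref{Alessandrini identity local N-D} and the hypothesis on $\mathcal{N}^{\Sigma_1}_{\sigma^{(i)}}$ give $S(y,z)=0$ when both poles lie in $D_0$; $S$ solves the two elliptic equations in $y$ and in $z$ separately, so unique continuation applied to $S$ \emph{in the pole variables} (not to individual solutions $u^{(i)}$) propagates $S\equiv 0$ through $D\cup D_0$ up to $\Sigma_{K+1}$. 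An integral representation of $\nabla u^{(1)}\cdot\nabla u^{(2)}$ in terms of $\widetilde N^{(i)}$ then shows that $\langle\psi,(\mathcal{N}^{\Sigma_{K+1}}_{\sigma^{(1)}}-\mathcal{N}^{\Sigma_{K+1}}_{\sigma^{(2)}})\psi\rangle=0$ for every admissible $\psi$ supported in $\Sigma_{K+1}$. This singular-solution argument (adapted from \cite{A-K}) is the technical core of the proof and is what your proposal is missing; without it the induction does not close. (A minor additional point: to apply Lemma \ref{lemma sigma constant} from the local N-D map you also need Lemma \ref{lemma neumann map and K}, relating the map to the kernel via the four-point combination $K_\sigma$; and the non-flatness requirement is weaker than your ``Gauss map with nonempty interior image'' — the paper's case (a) shows a one-parameter family of tilted tangent planes already suffices.)
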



\section{The Neumann kernel}

From now on we shall denote by $\sigma(x)=\left\{\sigma_{ij}(x)\right\}_{i,j=1,\dots , n}$, $x\in\Omega$ a symmetric, positive definite matrix valued function satisfying \eqref{unifellip} and denote by $L$ the operator

\begin{equation}\label{operator L}
L=\mbox{div}\left(\sigma\nabla\cdot\right).
\end{equation}

We shall also introduce the matrix

\begin{equation}\label{tau}
g = \left(\det\sigma\right)^{\frac{1}{n-2}}\sigma^{-1}.
\end{equation}

\begin{remark}If we endow the open set $\Omega$ with the Riemannian metric $g$, then
\[ \frac{1}{\sqrt {det g}} L = \Delta_g, \]
that is, up to the factor $\frac{1}{\sqrt{det g}}$, the operator $L$ can be viewed as the Laplace-Beltrami operator for the Riemannian manifold $\left\{{M,g}\right\}$, see for instance \cite{B-G-M}, \cite{U}.
We emphasize that, being $n>2$, the knowledge of $\sigma$ is equivalent to the knowledge of $g$.
\end{remark}

We digress for a while and consider the operator \eqref{operator L} on a half space with $\sigma$ constant.
We denote by

\[\mathbb{R}^n_{+}=\left\{x=(x',x_n)\in\mathbb{R}^n\:|\:x_n>0\right\},\]

and by

\[\Pi_n =\left\{x=(x',x_n)\in\mathbb{R}^n\:|\:x_n=0\right\}\]

the half space in $\mathbb{R}^n$  and the hyperplane in $\mathbb{R}^n$ of points with vanishing $n^{th}$ coordinate respectively. From now on we will denote by $\xi\cdot\rho$ the Euclidean scalar product of vectors $\xi,\rho\in\mathbb{R}^n$.

Note that when $\sigma$ is constant, the same is true for $g$. We shall denote by $g_{(n-1)}$ the $(n-1)\times(n-1)$ submatrix of $g$ obtained by removing the $n^{th}$ row and column from $g$.

\begin{lemma}\label{lemma N and tau}
Let $N_{\sigma}$ be the Neumann kernel for the operator \eqref{operator L}, with $\sigma\in Sym_{n}$, on the half space $\mathbb{R}^{n}_{+}$. For every $x\in \mathbb{R}^{n}_{+}$ and $y'\in\Pi_n$ we have

\begin{equation}\label{N sigma constant}
N_{\sigma}(x,y') = 2C_n\left(g(x-y')\cdot(x-y')\right)^{\frac{2-n}{2}},
\end{equation}

where $C_{n}=1/n(n-2)\omega_n$ , with $\omega_n$ denoting the volume of the unit ball in $\mathbb{R}^n$. In particular, if $N_{\sigma}(x',y')$ is known for every $x',y'\in\Pi_n$ then $g_{(n-1)}$ is uniquely determined.
\end{lemma}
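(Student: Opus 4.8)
The plan is to reduce everything to the known fundamental solution of a constant-coefficient elliptic operator via a linear change of variables that converts $L = \operatorname{div}(\sigma\nabla\cdot)$ on the half space into the Laplacian on a (rotated) half space. First I would recall that the fundamental solution of $-\operatorname{div}(\sigma\nabla\cdot)$ in $\mathbb{R}^n$, $\sigma\in Sym_n$ positive definite and constant, is
\begin{equation*}
\Phi_\sigma(x) = C_n (\det\sigma)^{-1/2}\,(\sigma^{-1}x\cdot x)^{\frac{2-n}{2}},
\end{equation*}
with $C_n = 1/\big(n(n-2)\omega_n\big)$; this is obtained from $\Phi(x)=C_n|x|^{2-n}$, the fundamental solution of $-\Delta$, by the substitution $x = \sigma^{1/2}z$, since $\operatorname{div}(\sigma\nabla\cdot)$ pulls back to $\Delta_z$ and the Jacobian contributes the $(\det\sigma)^{-1/2}$ factor. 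Rewriting $\sigma^{-1} = (\det\sigma)^{-\frac{1}{n-2}} g$ turns this into $\Phi_\sigma(x) = C_n\,(g x\cdot x)^{\frac{2-n}{2}}$, because the powers of $\det\sigma$ cancel: $(\det\sigma)^{-1/2}\big((\det\sigma)^{-\frac{1}{n-2}}\big)^{\frac{2-n}{2}} = (\det\sigma)^{-1/2}(\det\sigma)^{1/2}=1$. This algebraic cancellation is the small computational pleasure of the lemma and explains why $g$, rather than $\sigma$, is the natural object.

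Next I would construct the Neumann function on $\mathbb{R}^n_+$ by reflection. The naive Euclidean reflection $x\mapsto(x',-x_n)$ is not adapted to $\sigma$, so instead I would change variables by $z=\sigma^{1/2}x$, in which $L$ becomes $\Delta_z$ and the half space $\{x_n>0\}$ becomes a half space $\{z:\, \nu\cdot z>0\}$ with some unit normal $\nu$ determined by $\sigma$. In these coordinates the Neumann function for $-\Delta_z$ on a half space is the classical $\Phi(z-w) + \Phi(z-w^*)$ where $w^*$ is the Euclidean reflection of $w$ across the hyperplane $\{\nu\cdot z=0\}$; when $w$ lies on that hyperplane, $w^*=w$ and the two terms coincide, giving $2\Phi(z-w)$. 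Pulling this back to $x$-coordinates and using the identity of the previous paragraph yields, for $x\in\mathbb{R}^n_+$ and $y'\in\Pi_n$,
\begin{equation*}
N_\sigma(x,y') = 2C_n\,\big(g(x-y')\cdot(x-y')\big)^{\frac{2-n}{2}},
\end{equation*}
which is \eqref{N sigma constant}. I should check the normalization of the Neumann kernel against the definition used elsewhere in the paper (i.e. whether $N_\sigma$ is the kernel of the operator $\mathcal N_\sigma$ or of the solution operator for the Neumann problem) and insert the matching constant; modulo that bookkeeping the factor $2C_n$ is forced.

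For the final assertion, suppose $N_\sigma(x',y')$ is known for all $x',y'\in\Pi_n$. Then the scalar function $(x'-y')\mapsto \big(g_{(n-1)}(x'-y')\cdot(x'-y')\big)^{\frac{2-n}{2}}$ is known on $\mathbb{R}^{n-1}$, since for $x,y'\in\Pi_n$ the $n$th components vanish and $g(x-y')\cdot(x-y')$ reduces to $g_{(n-1)}(x'-y')\cdot(x'-y')$. Raising to the power $\frac{2}{2-n}$ recovers the quadratic form $h\mapsto g_{(n-1)}h\cdot h$ on all of $\mathbb{R}^{n-1}$, and a symmetric matrix is determined by its quadratic form (polarization), so $g_{(n-1)}$ is uniquely determined. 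I expect the main obstacle to be purely expository rather than mathematical: making the reflection/change-of-variables argument clean enough that the correct normalization constant and the precise sense in which "$N_\sigma$ is the Neumann kernel" are unambiguous — the positivity and symmetry of $\sigma$ guarantee $\sigma^{1/2}$ is well defined and the half space maps to a half space, so no genuine difficulty arises there.
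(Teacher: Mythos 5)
Your proposal is correct and follows essentially the same route as the paper: a linear change of variables reducing $\operatorname{div}(\sigma\nabla\cdot)$ to the Laplacian, the method of images on the transformed half space, and the observation that the powers of $\det\sigma$ cancel so that the kernel is expressed through $g$ alone (the paper merely packages the change of variables as $M=Q^{-1}$ with $Q=\alpha\sqrt{\sigma}R$ chosen to preserve $\mathbb{R}^n_+$ and to make the pushforward conductivity exactly $\sigma$, whereas you work in a tilted half space and track the Jacobian explicitly). The only slip is the direction of the substitution in your second paragraph: to turn $L$ into $\Delta_z$ you need $z=\sigma^{-1/2}x$ (equivalently $x=\sigma^{1/2}z$, as you correctly wrote in the first paragraph), not $z=\sigma^{1/2}x$.
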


\begin{proof}
We temporarily set $\sigma =I$, where $I$ is the $n\times n$ identity matrix and let


\[
T = \left(
\begin{array}{c|c}
  \raisebox{-15pt}{{\huge\mbox{{$I_{(n-1)}$}}}} & 0 \\[-5ex]
  & \vdots \\[-0.5ex]
  & 0 \\ \hline
   0 \cdots 0& \!\!\!-1
\end{array}
\right),
\]

where $I_{(n-1)}$ denotes the $(n-1)\times (n-1)$ identity matrix.\\

It is well-known that the Neumann kernel for the Laplacian on the half space $\mathbb{R}^n_{+}$ is given by

\begin{equation}\label{neumann kernel}
N_{I}(x,y)=\Gamma(x-y)+\Gamma(x-Ty),
\end{equation}

for every $x,y\in\overline{\mathbb{R}^{n}_{+}}$, $x\neq y$. Here

\begin{equation}\label{fundamental solution laplace}
\Gamma(x)=C_{n}|x|^{2-n}\qquad
\end{equation}

is the fundamental solution for the Laplacian in $\mathbb{R}^n$ and

\begin{equation}\label{Cn}
C_{n}=\frac{1}{n(n-2)\omega_n},
\end{equation}

where $\omega_n$ denotes the volume of the unit ball in $\mathbb{R}^n$.  Let $M$ be an $n\times n$ invertible matrix such that $M\mathbb{R}^n_{+}=\mathbb{R}^n_{+}$. Let $Q=M^{-1}$ and consider the linear change of coordinates

\[\xi=Mx,\qquad x=Q\xi,\qquad\textnormal{for\:any}\quad x\in\mathbb{R}^n_{+}.\]

For every $\psi\in C^{0,1}_{0}(\mathbb{R}^n)$ and every $y\in\overline{\mathbb{R}^n_{+}}$ we have

\begin{equation}\label{I1}
\int_{\mathbb{R}^{n}_{+}} \nabla_{\xi}N_{I}(\xi,\eta)\cdot\nabla_{\xi}\psi(\xi)\:d\xi =\psi(\eta),
\end{equation}

where $\eta=My$. Changing variables

\begin{equation}\label{I2}
\int_{\mathbb{R}^{n}_{+}} \frac{QQ^{T}}{\det{Q}}\nabla_{x}N_{I}(Mx,My)\cdot\nabla_{x}\psi(Mx)\:dx =\psi(My).
\end{equation}

We fix an arbitrary matrix $\sigma\in Sym_n$, positive definite and constant. We look for a matrix $M=Q^{-1}$ as above such that

\begin{equation}\label{sigma, Q}
\frac{QQ^{T}}{\det(Q)}=\sigma.
\end{equation}

For this purpose, we set

\begin{equation}\label{Q}
Q=\alpha\sqrt{\sigma}R,
\end{equation}

where $\alpha>0$ is to be chosen later on, $\sqrt{\sigma}$ denotes the symmetric, positive definite, square root of $\sigma$ and $R$ is an orthogonal transformation chosen in such a way that

\[Q\mathbb{R}^n_{+}=\mathbb{R}^n_{+}\qquad(\textnormal{i.e.}\quad M\mathbb{R}^n_{+}=\mathbb{R}^n_{+}).\]

$R$ can be readily determined by prescribing

\[R\mathbb{R}^n_{+}=\sqrt{\sigma^{-1}}\mathbb{R}^n_{+}.\]

Next, we determine $\alpha$. Note that by \eqref{sigma, Q} we must have

\[\left(\det(Q)\right)^{2-n}=\det(\sigma)\]

whereas, by \eqref{Q}

\[\det(Q)=\alpha^n\left(\det(\sigma)\right)^{\frac{1}{2}},\]

hence

\[\alpha^{n}=\left(\det(\sigma)\right)^{\frac{n}{2(2-n)}},\]

that is

\[\alpha = \left(\det(\sigma)\right)^{\frac{1}{2(2-n)}}.\]

With the above choices, we obtain

\begin{equation}\label{N sigma constant 1}
N_{\sigma}(x,y)=N_{I}(Mx,My),
\end{equation}

or as is the same

\begin{equation}\label{N sigma constant 2}
N_{\sigma}(x,y)=C_n\:\left(|M(x-y)|^{2-n} + |Mx - TMy|^{2-n}\right),
\end{equation}

where $C_n$ is given by \eqref{Cn}. Let $S$ be such that $TM=MS$ that is

\[S=QTQ^{-1}=\sqrt{\sigma}RTR^{T}\sqrt{\sigma^{-1}}.\]

Note that if $y'\in\Pi_n$, then $Q^{-1}y'\in\Pi_n$, hence $TQ^{-1}y'=Q^{-1}y'$, therefore

\begin{equation}\label{S on the hyperplane}
Sy' = y',\qquad\textnormal{for\:every}\quad y'\in\Pi_n.
\end{equation}

Consequently

\begin{equation}\label{N sigma constant 3}
N_{\sigma}(x,y)=C_n\left(\left(M^{T}M(x-y)\cdot(x-y)\right)^{\frac{2-n}{2}} +
\left(M^{T}M(x-Sy)\cdot(x-Sy)\right)^{\frac{2-n}{2}} \right).
\end{equation}

We observe that

\[M^{T}M=\alpha^{-2}\sqrt{\sigma^{-1}}RR^{T}\sqrt{\sigma^{-1}}=\alpha^{-2}\sigma^{-1}=\left(\det(\sigma)\right)^{\frac{1}{n-2}}\sigma^{-1}=g.\]

Therefore

\begin{equation}\label{N sigma constant 4}
N_{\sigma}(x,y)=C_n\:\left(\left(g(x-y)\cdot(x-y)\right)^{\frac{2-n}{2}} +
\left(g(x-Sy)\cdot(x-Sy)\right)^{\frac{2-n}{2}} \right),
\end{equation}

hence \eqref{N sigma constant} follows and, in particular, when $x',y'\in\Pi_n$

\begin{equation}\label{N sigma constant 4 Pi}
N_{\sigma}(x',y')=2C_n\:\left(g(x'-y')\cdot(x'-y')\right)^{\frac{2-n}{2}}
\end{equation}

or as is the same

\[g(x'-y')\cdot(x'-y') = \left(\frac{N_{\sigma}(x',y')}{2C_{n}}\right)^{\frac{2}{2-n}},\quad\textnormal{for\:all}\quad x',y'\in\Pi_n.\]

Consequently $g_{(n-1)}$ is uniquely determined by $N_{\sigma}(x',y')$, $x',y'\in\Pi_n$.

\end{proof}

We shall also introduce the Neumann kernel $N^{\Omega}_{\sigma}$ for the boundary value problem associated with the operator \eqref{operator L} and $\Omega$ by defining it, for any $y\in\Omega$, $N^{\Omega}_{\sigma}(\cdot,y)$ to be the distributional solution to

\begin{displaymath}\label{def neumann kernel}
\left\{ \begin{array}{ll}
L\:N^{\Omega}_{\sigma}(\cdot,y)=-\delta(\cdot -y), & \textnormal{in}\quad\Omega\\
\sigma\nabla N^{\Omega}_{\sigma}(\cdot, y)\cdot\nu= -\frac{1}{\vert\partial\Omega\vert},
& \textnormal{on}\quad{\partial\Omega}.
\end{array} \right.
\end{displaymath}

Note that $N^{\Omega}_{\sigma}$ is uniquely determined up to an additive constant. For simplicity we impose the normalization

\[\int_{\partial\Omega} N^{\Omega}_{\sigma}(\cdot,y)\:dS(\cdot)=0.\]

With this convention we obtain by Green's identities that

\begin{equation}\label{symmetry of N}
N^{\Omega}_{\sigma}(x,y) = N^{\Omega}_{\sigma}(y,x),\qquad\textnormal{for\:all}\quad x,y\in\Omega,\quad x\neq y.
\end{equation}

\begin{remark}
$N^{\Omega}_{\sigma}(x,y)$ extends continuously up to the boundary $\partial\Omega$ (provided that $x\neq y$) and in particular, when $y\in\partial\Omega$, it solves

\begin{displaymath}\label{neumann kernel boundary}
\left\{ \begin{array}{ll}
L\:N^{\Omega}_{\sigma}(\cdot,y)=0, & \textnormal{in}\quad\Omega\\
\sigma\nabla N^{\Omega}_{\sigma}(\cdot, y)\cdot\nu= \delta(\cdot -y)-\frac{1}{\vert\partial\Omega\vert},
& \textnormal{on}\quad{\partial\Omega}.
\end{array} \right.
\end{displaymath}
\end{remark}

\begin{theorem}\label{theorem neumann function holder}
Let $y\in\partial\Omega$ and assume that there exists a neighbourhood $\mathcal{U}$ of $y$ such that $\partial\Omega\cap\mathcal{U}$ is a portion of class $C^{1,\alpha}$, with $0<\alpha<1$, of $\partial\Omega$ and $\sigma$ in \eqref{operator L} is such that $\sigma\in C^{\alpha}(\mathcal{U}\cap\overline\Omega)$. Then the Neumann kernel $N^{\Omega}_{\sigma}(\cdot,y)$ satisfies

\begin{equation}\label{neumann kernel holder coefficients}
N^{\Omega}_{\sigma}(x,y)=
2 C_{n}\:\big(\det(\sigma(y))\big)^{-1/2}\Big(\sigma^{-1}(y)(x-y)\cdot(x-y)\Big)^{\frac{2-n}{2}}+O(|x-y|^{2-n+\alpha}),
\end{equation}

\noindent as $x\rightarrow y$, $x\in\overline{\Omega}\setminus{\{y\}}$ and $C_{n}$ is the constant given in \eqref{Cn}.





\end{theorem}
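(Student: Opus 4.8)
The plan is to reduce the statement, by a standard localization and freezing-coefficients argument, to the constant-coefficient computation already carried out in Lemma~\ref{lemma N and tau}. Fix $y\in\partial\Omega$ as in the hypotheses and choose the rigid motion so that $y=0$ and $\partial\Omega\cap\mathcal U$ agrees near $0$ with the graph $x_n=\varphi(x')$, $\varphi(0)=|\nabla\varphi(0)|=0$, $\varphi\in C^{1,\alpha}$. After flattening the boundary by the $C^{1,\alpha}$ diffeomorphism $\Phi(x',x_n)=(x',x_n-\varphi(x'))$ and pushing $\sigma$ forward, one obtains an operator of the same divergence form on a half-ball $B_r^+$, still with $C^\alpha$ coefficients (since the Jacobian of $\Phi$ is $C^\alpha$), and with the Neumann kernel transforming by a $C^\alpha$ factor. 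The key point is that both the geometry and the coefficient matrix at the point $y=0$ are only perturbed to order $|x|^\alpha$, so that the leading singularity is unaffected; thus without loss of generality we may assume $\partial\Omega$ is flat near $0$ and work in $B_r^+$.

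Next I would freeze the coefficients: write $\sigma(x)=\sigma(0)+(\sigma(x)-\sigma(0))$ with $|\sigma(x)-\sigma(0)|\le C|x|^\alpha$ by the $C^\alpha$ hypothesis, and let $N_{\sigma(0)}$ be the explicit constant-coefficient half-space Neumann kernel from Lemma~\ref{lemma N and tau}, namely
\[
N_{\sigma(0)}(x,0)=2C_n\big(\det\sigma(0)\big)^{-1/2}\big(\sigma^{-1}(0)x\cdot x\big)^{\frac{2-n}{2}},
\]
where I have used $g(0)=(\det\sigma(0))^{1/(n-2)}\sigma^{-1}(0)$ so that $(g(0)x\cdot x)^{(2-n)/2}=(\det\sigma(0))^{-1/2}(\sigma^{-1}(0)x\cdot x)^{(2-n)/2}$, matching \eqref{neumann kernel holder coefficients}. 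Set $w=N^{\Omega}_{\sigma}(\cdot,0)-N_{\sigma(0)}(\cdot,0)$. Then $w$ solves, in $B_r^+$, $\operatorname{div}(\sigma\nabla w)=\operatorname{div}\big((\sigma(0)-\sigma)\nabla N_{\sigma(0)}(\cdot,0)\big)=:\operatorname{div}F$ with a Neumann condition on the flat part that, after the reductions, is $O(1)$ (smooth data), so it contributes only a harmless term. Since $|\nabla N_{\sigma(0)}(x,0)|\le C|x|^{1-n}$ and $|\sigma(0)-\sigma(x)|\le C|x|^\alpha$, the forcing satisfies $|F(x)|\le C|x|^{1-n+\alpha}$, a borderline-integrable Morrey-type bound.

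The heart of the argument, and the main obstacle, is the sharp Schauder/Morrey-type estimate showing that the solution $w$ of a divergence-form equation with $C^\alpha$ coefficients and right-hand side in divergence form with $F$ of size $|x|^{1-n+\alpha}$ inherits the improved decay $|w(x)|\le C|x|^{2-n+\alpha}$ (rather than merely $|x|^{2-n}$). The plan is to run this on dyadic annuli $A_k=B_{2^{-k}r}^+\setminus B_{2^{-k-1}r}^+$: rescale $A_k$ to a unit annulus $A_0$ via $x=2^{-k}rz$, apply interior-plus-boundary $C^{1,\alpha}$ (or even just $L^\infty$) estimates for the rescaled equation — whose coefficients have $C^\alpha$ seminorm controlled uniformly because the rescaling only improves it — obtaining $\|w\|_{L^\infty(A_k)}\le C\big(2^{2k}\cdot\text{(avg of $w$ on a slightly larger annulus)}\cdot 2^{-2k}+ \text{(rescaled forcing norm)}\big)$; the rescaled forcing contributes a factor $(2^{-k}r)^{2}\cdot(2^{-k}r)^{-1+\alpha}/(2^{-k}r)=(2^{-k}r)^{\alpha}$ against the base singularity $(2^{-k}r)^{2-n}$. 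Summing the geometric series in $k$, together with the a~priori bound $|w(x)|\le C|x|^{2-n}$ coming from the known singularity of $N^{\Omega}_{\sigma}$ (e.g. via Littman–Stampacchia–Weinberger Green-function estimates), yields $|w(x)|=O(|x|^{2-n+\alpha})$, which is precisely \eqref{neumann kernel holder coefficients}. Care must be taken that the transformation back through $\Phi$ does not degrade the exponent: since $\Phi$ and $\Phi^{-1}$ are $C^{1,\alpha}$ with $D\Phi(0)=I$, one has $|\Phi(x)-x|=O(|x|^{1+\alpha})$, so $|\sigma^{-1}(0)\Phi(x)\cdot\Phi(x)-\sigma^{-1}(0)x\cdot x|=O(|x|^{2+\alpha})$, and expanding $(\,\cdot\,)^{(2-n)/2}$ produces only an $O(|x|^{2-n+\alpha})$ correction, absorbed into the error term. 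This closes the proof.
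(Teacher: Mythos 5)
Your localization, boundary flattening, and freezing of the coefficients at $y=0$ coincide with the paper's set-up, and your identification of the leading term via Lemma \ref{lemma N and tau} is correct. The gap is in the step you yourself flag as ``the heart of the argument'': the dyadic-annulus rescaling cannot upgrade the a priori bound $|w(x)|\le C|x|^{2-n}$ to $|w(x)|\le C|x|^{2-n+\alpha}$. On the annulus of radius $r_k=2^{-k}r$ your rescaled local $L^\infty$ estimate gives $\|w\|_{L^\infty(A_k)}\le C\big(\sup_{A_k'}|w|+r_k^{2-n+\alpha}\big)\le C r_k^{2-n}$, which merely reproduces the input; there is no geometric series whose summation improves the exponent. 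More fundamentally, every piece of information you feed into the annulus estimates --- the divergence-form equation away from the origin, the forcing bound $|F|\le C|x|^{1-n+\alpha}$, Neumann data of the corresponding order on the flat part, and the size bound $|w|\le C|x|^{2-n}$ --- is preserved (to the relevant orders) if one replaces $w$ by $w+c\,N_{\sigma(0)}(\cdot,0)$ for a constant $c$, while the conclusion \eqref{neumann kernel holder coefficients} is destroyed. Hence no purely local argument on punctured neighbourhoods of the origin can close the proof: one must use that the delta sources of $N^{\Omega}_{\sigma}(\cdot,y)$ and of the frozen-coefficient kernel cancel, i.e.\ that the difference carries no point mass at the origin.

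The paper supplies exactly this missing ingredient. It writes a Green representation of $R(w)=\widetilde N(w)-N_0(w,0)$ over the whole half-ball $B^{+}_{\rho}$, origin included, tested against the explicit constant-coefficient kernel $N_0(\cdot,w)$, see \eqref{R I2}. The outer boundary terms are uniformly bounded for $|w|<\rho/2$, hence dominated by $|w|^{2-n+\alpha}$ as $w\to0$ since $2-n+\alpha<0$, and the only singular contribution is the volume term, controlled by the elementary convolution bound $\int_{B^{+}_{\rho}}|z'|^{\alpha}|z-w|^{1-n}|z|^{1-n}\,dz\le C|w|^{2-n+\alpha}$ of \eqref{estimate R I2 volume}, which in turn relies on the kernel bounds \eqref{estimate N}--\eqref{estimate gradient N}. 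If you wish to retain an iteration flavour you would still need such a global representation (or an equivalent device ruling out a residual multiple of the fundamental solution) as an anchor; as written, your proposal does not establish the theorem.
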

\begin{proof}
This result has a classical flavour and is possibly well-known. We refer to \cite[Chapter 1]{Mi} and \cite[(1.31)-(1.33)]{Mit-T} for the case $\sigma\in C^{\alpha}(\Omega)$, with $\partial\Omega$ of class $C^{1,\alpha}$. We sketch a proof for the sake of completeness. We represent $\Sigma =\partial\Omega\cap\mathcal{U}$ according to definition \ref{def C1 alpha boundary}, and assume without loss of generality that $y=0$. Let $r>0$ be such that $\overline{B_{r}(0)}\subset\mathcal{U}$. For any $\psi\in C^{0,1}_{0}(B_{r}(0))$ we have

\begin{equation}\label{psi I1}
\int_{\Omega\cap B_{r}(0)} \sigma(x)\nabla_{x}N^{\Omega}_{\sigma}(x,0)\cdot\nabla_{x}\psi(x)\:dx =\psi(0)-\frac{1}{|\partial\Omega|}\int_{\partial\Omega\cap B_{r}(0)} \psi(x)\:dS(x).
\end{equation}

We introduce the change of coordinates $z=z(x)$ ($x=x(z)$)

\begin{displaymath}\label{change of coordinates}
\left\{ \begin{array}{ll}
z'  = x' & \\
z_n  = x_n - \varphi(x') &.
\end{array} \right.
\end{displaymath}

We have

\begin{equation}\label{z}
z=x+O(|x'|^{1+\alpha})
\end{equation}

and also, setting $J=\frac{\partial z}{\partial x}$,

\begin{equation}\label{J}
J=I+O(|x'|^{\alpha}).
\end{equation}

Next, we define

\begin{eqnarray}
\widetilde{\sigma}(z) & = &\left(\frac{1}{\det(J)}\:J\sigma J^{T}\right)(x(z))\label{sigma tilde1}\\
\widetilde{N}(z) & = & N^{\Omega}_{\sigma}\left(x(z),0\right)\label{N tilde}.
\end{eqnarray}

We obtain

\begin{equation}\label{psi I2}
\int_{\{z_n>0\}} \widetilde{\sigma}(z)\nabla_{z}\widetilde{N}(z)\cdot\nabla_{z}\psi(x(z))\:dz =\psi(0)-\frac{1}{|\partial\Omega |}\int_{\Pi_n} \psi(z',0)\sqrt{1+|\nabla_{z'}\varphi|^2}\:dz'.
\end{equation}

We denote

\[q(z')=\frac{1}{|\partial\Omega|}\:\sqrt{1+|\nabla_{z'}\varphi|^2}.\]

Note that $q$ is bounded and that

\begin{equation}\label{sigma tilde2}
\widetilde{\sigma}(z)=\sigma(0)+O(|z'|^{\alpha}).
\end{equation}

Let $N_0$ denote the Neumann function for $\mathbb{R}^{n}_{+}$ with $\sigma = \sigma(0)$ and denote

\begin{equation}\label{R}
R(z) = \widetilde{N}(z) - N_{0}(z,0).
\end{equation}

We have

\begin{multline}\label{psi I3}
\int_{\{z_n>0\}} \widetilde{\sigma}(0)\nabla_{z}R(z)\cdot\nabla_{z}\psi(x(z))\:dz \\
= \int_{\{z_n>0\}}\left(\widetilde{\sigma}(0) - \widetilde{\sigma}(z)\right)\nabla_{z}\widetilde{N}(z)\cdot\nabla_{z}\psi(x(z))\:dz
 - \int_{\Pi_n} \psi(z',0)q(z')\:dz'.
 \end{multline}

Hence, for a sufficiently small $\rho>0$ we have


\begin{displaymath}\label{problem for R}
\left\{ \begin{array}{ll}
\textnormal{div}_z\left(\sigma(0)\nabla_z R(z)\right) = \textnormal{div}_z\left((\sigma(0) - \widetilde{\sigma}(z))\nabla_z \widetilde{N}(z)\right)  & \textnormal{in}\quad B_{\rho}(0)\cap\mathbb{R}^{n}_{+}\\
\sigma(0)\nabla_z R(z)\cdot\nu= (\sigma(0) - \widetilde{\sigma}(z))\nabla_z \widetilde{N}(z)\cdot\nu - q(z') & \textnormal{in}\quad B_{\rho}(0)\cap\Pi_n.
\end{array} \right.
\end{displaymath}

We recall that

\begin{equation}\label{estimate N}
|N^{\Omega}_{\sigma}(x,0)|\leq C|x|^{2-n},\qquad\textnormal{for\:every}\quad x\in\Omega,
\end{equation}

where $C>0$ is a constant that only depends on ellipticity and on the Lipschitz regularity of $\partial\Omega$ (see e.g. \cite{Ke-P}). Next using the local regularity of $\sigma$ and of $\Sigma\subset\partial\Omega$ we also obtain

\begin{equation}\label{estimate gradient N}
|\nabla_{x}N^{\Omega}_{\sigma}(x,0)|\leq C|x|^{1-n},\qquad\textnormal{for\:every}\quad x\in B_{\rho}(0)\cap\Omega.
\end{equation}

Consequently

\begin{equation}\label{estimate R and gradient of R}
|R(z)|+|z|\:|\nabla_{z} R(z)|\leq C\qquad\textnormal{for\:every}\quad z\in \partial{B}_{\rho}(0)\cap\mathbb{R}^{n}_{+}.
\end{equation}

By Green's identities, setting $B^{+}_{\rho}=B_{\rho}(0)\cap\mathbb{R}^{n}_{+}$, for every $w\in B^{+}_{\rho}$ we obtain

\begin{eqnarray}\label{R I1}
R(w) &=& -\int_{B^{+}_{\rho}} R(z)\:\textnormal{div}_z\left(\sigma(0)\nabla_z N_{0}(z,w)\right)\:dz\nonumber\\
&=& -\int_{\partial{B}^{+}_{\rho}} \Big(R(z)\sigma(0)\nabla_z N_{0}(z,w)\cdot\nu - N_{0}(z,w)\sigma(0)\nabla_z R(z)\cdot\nu\Big)\:dS(z)\nonumber\\
&&- \int_{B^{+}_{\rho}} N_{0}(z,w)\:\textnormal{div}_z\left(\sigma(0)\nabla_z R(z)\right)\:dz\nonumber\\
&=& -\int_{\partial{B}^{+}_{\rho}} \Big(R(z)\sigma(0)\nabla_z N_{0}(z,w)\cdot\nu - N_{0}(z,w)\sigma(0)\nabla_z R(z)\cdot\nu\Big)\:dS(z)\nonumber\\
&&- \int_{\partial{B}^{+}_{\rho}} N_{0}(z,w)\left(\sigma(0)-\widetilde{\sigma}(z)\right)\nabla_{z}\widetilde{N}(z,w)\cdot\nu\:\:dS(z)\nonumber\\
&&+ \int_{B^{+}_{\rho}} \left(\sigma(0)-\widetilde{\sigma}(z)\right)\nabla_{z}N_{0}(z,w)\cdot\nabla_{z}\widetilde{N}(z)\:\:dz.
\end{eqnarray}

If we split $\partial{B}^{+}_{\rho}=\left(\partial{B}_{\rho}\cap\mathbb{R}^{n}_{+}\right)\cup\left(B_{\rho}\cap\Pi_n\right)$, we obtain

\begin{eqnarray}\label{R I2}
R(w) &=& -\int_{\partial{B}_{\rho}\cap\mathbb{R}^{n}_{+}} \Big(R(z)\sigma(0)\nabla_z N_{0}(z,w)\cdot\nu - N_{0}(z,w)\sigma(0)\nabla_z R(z)\cdot\nu\Big)\:dS(z)\nonumber\\
&&- \int_{\partial{B}_{\rho}\cap\mathbb{R}^{n}_{+}} N_{0}(z,w)\left(\sigma(0)-\widetilde{\sigma}(z)\right)\nabla_{z}\widetilde{N}(z,w)\cdot\nu\:\:dS(z)\nonumber\\
& & - \int_{B_{\rho}\cap\Pi_n} N_{0}(z',w)q(z')\:dz' \nonumber\\
&&+ \int_{B^{+}_{\rho}} \left(\sigma(0)-\widetilde{\sigma}(z)\right)\nabla_{z}N_{0}(z,w)\cdot\nabla_{z}\widetilde{N}(z)\:\:dz.
\end{eqnarray}

Taking $|w|<\frac{\rho}{2}$ all the boundary integrals in \eqref{R I2} are uniformly bounded. Whereas the volume integral appearing in \eqref{R I2}, in view of \eqref{N sigma constant 4} and of \eqref{estimate gradient N}, can be estimated as follows

\begin{eqnarray}\label{estimate R I2 volume}
& &\left|\int_{B^{+}_{\rho}} \left(\sigma(0)-\widetilde{\sigma}(z)\right)\nabla_{z}N_{0}(z,w)\cdot\nabla_{z}\widetilde{N}(z)\:\:dz\right|\nonumber\\
& &\leq  C\int_{B^{+}_{\rho}} |z'|^{\alpha}\:|z-w|^{1-n}\:|z|^{1-n}\:dz\leq C|w|^{2-n+\alpha},
\end{eqnarray}

hence $|R(z)|\leq C |z|^{2-n+\alpha}$ on $B^{+}_{\rho}$ and recalling that $|z|=O(|x|)$ the thesis follows.

\end{proof}

Therefore we have

\begin{lemma}\label{lemma tau tangential}
If $y'\in\partial\Omega$ and there is a neighbourhood $\mathcal{U}$ of $y'$ such that $\partial\Omega\cap\mathcal{U}$ is a portion of $\partial\Omega$ of class $C^{1,\alpha}$ and $L$ is the operator \eqref{operator L}, with coefficients matrix $\sigma\in C^{\alpha}(\mathcal{U}\cap\overline\Omega)$, with $0<\alpha<1$, then the knowledge of $N^{\Omega}_{\sigma}(x,y')$, for every $x\in\partial\Omega\cap\mathcal{U}$ uniquely determines

\begin{equation}\label{tau boundary}
g_{(n-1)}(y')=\left\{g(y')v_i \cdot v_j\right\}_{i,j=1,\dots , (n-1)},
\end{equation}

where $v_1,\dots ,v_{n-1}$  is a basis for $T_{y'} (\partial\Omega)$, the tangent plane to $\partial\Omega$ at $y'$.
\end{lemma}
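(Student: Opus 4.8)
The plan is to reduce the statement of Lemma \ref{lemma tau tangential} to a local computation in flattened coordinates and then invoke the asymptotic expansion of the Neumann kernel proved in Theorem \ref{theorem neumann function holder}. Without loss of generality, after a rigid motion we place $y'=0$ and represent $\partial\Omega\cap\mathcal{U}$ as a graph $x_n=\varphi(x')$ with $\varphi(0)=|\nabla_{x'}\varphi(0)|=0$, as in Definition \ref{def C1 alpha boundary}. Then $T_{0}(\partial\Omega)$ is the hyperplane $\Pi_n=\{x_n=0\}$, and we may take the coordinate vectors $e_1,\dots,e_{n-1}$ as the basis $v_1,\dots,v_{n-1}$, so that $g_{(n-1)}(0)$ is exactly the $(n-1)\times(n-1)$ minor $\big(g(0)\big)_{(n-1)}$ of $g(0)$ obtained by deleting the $n$-th row and column (in these coordinates $g(0)=(\det\sigma(0))^{1/(n-2)}\sigma^{-1}(0)$).

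The main step is then to extract $g_{(n-1)}(0)$ from the tangential behaviour of $N^{\Omega}_{\sigma}(x,0)$. By Theorem \ref{theorem neumann function holder}, for $x\to 0$ with $x\in\overline\Omega\setminus\{0\}$,
\begin{equation*}
N^{\Omega}_{\sigma}(x,0)=2C_n\big(\det\sigma(0)\big)^{-1/2}\Big(\sigma^{-1}(0)x\cdot x\Big)^{\frac{2-n}{2}}+O(|x|^{2-n+\alpha}).
\end{equation*}
Now I would restrict $x$ to the boundary. Writing $x=(x',\varphi(x'))$ with $|x'|$ small, we have $x=(x',0)+O(|x'|^{1+\alpha})$ by the $C^{1,\alpha}$ regularity, so $\sigma^{-1}(0)x\cdot x=\sigma^{-1}(0)x'\cdot x'+O(|x'|^{2+\alpha})$ where on the right $x'$ is viewed as the vector $(x',0)\in\Pi_n$; since the leading quadratic form is comparable to $|x'|^2$ by ellipticity, raising to the power $(2-n)/2$ and keeping track of the error gives
\begin{equation*}
N^{\Omega}_{\sigma}\big((x',\varphi(x')),0\big)=2C_n\big(\det\sigma(0)\big)^{-1/2}\Big(\big(\sigma^{-1}(0)\big)_{(n-1)}x'\cdot x'\Big)^{\frac{2-n}{2}}+O(|x'|^{2-n+\alpha}),
\end{equation*}
the error being genuinely lower order because $2-n+\alpha>2-n$. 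Observe that $\big(\det\sigma(0)\big)^{-1/2}\big(\sigma^{-1}(0)\big)_{(n-1)}=\big(\det\sigma(0)\big)^{-1/2}\big(\det\sigma(0)\big)^{-1/(n-2)}g_{(n-1)}(0)$ is a fixed positive-definite matrix, call it $A$, and $A$ together with $\det\sigma(0)$ determines $g_{(n-1)}(0)$; but in fact one only needs $A$ up to the scalar, and the argument below recovers $A$ itself. Taking two points $x',z'\in\Pi_n$ near $0$, forming $t\mapsto N^{\Omega}_{\sigma}\big((tx',\varphi(tx')),0\big)$ and isolating the coefficient of $t^{2-n}$ as $t\to 0^+$ — which is legitimate since the remainder is $O(t^{2-n+\alpha})=o(t^{2-n})$ — yields $\big(A\,x'\cdot x'\big)^{(2-n)/2}$ for every $x'\in\Pi_n$. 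Hence the quadratic form $A\,x'\cdot x'$ is known for all $x'\in\Pi_n$, so by polarization $A$ is uniquely determined, and therefore so is $g_{(n-1)}(0)$.

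The routine obstacle is the careful bookkeeping of the $O(|x'|^{1+\alpha})$ perturbation coming from the graph $\varphi$ propagating through the homogeneity-$(2-n)$ singular factor: one must check that it only contributes at order $|x'|^{2-n+\alpha}$ and not at order $|x'|^{2-n}$, which follows from the mean value theorem applied to $s\mapsto s^{(2-n)/2}$ on the interval between the perturbed and unperturbed quadratic forms, both of which are bounded below by $c|x'|^2$ by uniform ellipticity of $\sigma^{-1}(0)$. I do not expect any genuine difficulty beyond this; the conceptual content is entirely in Theorem \ref{theorem neumann function holder}, and the present lemma is the statement that its leading term, read off along the boundary, is exactly the tangential minor of $g$ at $y'$, expressed intrinsically via any basis $v_1,\dots,v_{n-1}$ of $T_{y'}(\partial\Omega)$ since a change of basis acts by congruence on both sides.
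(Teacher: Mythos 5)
Your proposal follows the paper's proof essentially verbatim in structure: reduce to coordinates where $y'=0$ and $T_{y'}(\partial\Omega)=\Pi_n$, restrict $x=(x',\varphi(x'))$ to the boundary, use the asymptotics of Theorem \ref{theorem neumann function holder} to isolate the coefficient of $|x'|^{2-n}$, check that the $O(|x'|^{1+\alpha})$ graph perturbation only pollutes the remainder, and polarize. The conclusion and all the analytic estimates are fine.

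There is, however, one bookkeeping slip in the final identification that, as literally written, leaves a gap. You define $A=\big(\det\sigma(0)\big)^{-1/2}\big(\sigma^{-1}(0)\big)_{(n-1)}$ and assert that the limit procedure yields $\big(A\,x'\cdot x'\big)^{(2-n)/2}$. It does not: the scalar $\big(\det\sigma(0)\big)^{-1/2}$ sits \emph{outside} the power $(2-n)/2$ in the expansion, so what you actually recover is
\begin{equation*}
\big(\det\sigma(0)\big)^{-1/2}\Big(\big(\sigma^{-1}(0)\big)_{(n-1)}x'\cdot x'\Big)^{\frac{2-n}{2}},
\end{equation*}
and pulling the scalar inside costs the exponent: $\big(\det\sigma(0)\big)^{-1/2}=\Big(\big(\det\sigma(0)\big)^{\frac{1}{n-2}}\Big)^{\frac{2-n}{2}}$. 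Your $A$ differs from the matrix that genuinely appears inside the power by the factor $\big(\det\sigma(0)\big)^{-1/2-1/(n-2)}$, and since $g_{(n-1)}(0)$ equals your $A$ only up to this \emph{unknown} scalar (nothing in the tangential data hands you $\det\sigma(0)$ separately), the step ``$A$ is determined, hence $g_{(n-1)}(0)$ is determined'' does not go through as stated; your parenthetical ``one only needs $A$ up to the scalar'' makes the problem worse, not better. The fix is the one-line computation just indicated: the prefactor absorbs exactly into the quadratic form to give
\begin{equation*}
\big(\det\sigma(0)\big)^{-1/2}\Big(\big(\sigma^{-1}(0)\big)_{(n-1)}x'\cdot x'\Big)^{\frac{2-n}{2}}=\Big(g_{(n-1)}(0)\,x'\cdot x'\Big)^{\frac{2-n}{2}},
\end{equation*}
so the limit determines $g_{(n-1)}(0)\xi\cdot\xi$ for all tangential $\xi$ and polarization finishes the proof. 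This exact cancellation is the reason the lemma is stated in terms of $g$ rather than in terms of $\sigma^{-1}$ and $\det\sigma$ separately, and it is what the paper's proof uses when it writes the limit directly as $2C_n\left(g(y')\xi\cdot\xi\right)^{\frac{2-n}{2}}$.
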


\begin{proof}
Without loss of generality we choose a coordinate system at $y'\in\partial\Omega$ such that $y'=0$ and the tangent plane to $\partial\Omega$ at $y'$ is $T_0 (\partial\Omega)=\Pi_n$. For any $\xi\in\Pi_n$, $|\xi|=1$, we choose $x'=r\xi$, with $r$ small and denote $x=(x',\varphi(x'))\in\partial\Omega$, then by \eqref{neumann kernel holder coefficients}

\[\lim_{r\rightarrow 0} N^{\Omega}_{\sigma}(x,y')\: r^{\frac{n-2}{2}}=2C_n\left(g(y')\xi \cdot \xi\right)^{\frac{2-n}{2}},\]

for all $\xi\in\Pi_n$, $|\xi|=1$. Hence $g_{(n-1)}(y')$ is uniquely determined.
\end{proof}

\begin{lemma}\label{lemma sigma constant}
Let $\Omega$ be a domain in $\mathbb{R}^n$ with boundary $\partial\Omega$ of Lipschitz class and let $\Sigma$ be an open portion of $\partial\Omega$ of class $C^{1,\alpha}$ and non flat near some point $y'_0\in\Sigma$. If $\sigma\in L^{\infty}(\Omega\:,Sym_{n})$ satisfies \eqref{ellitticita'sigma} and it is constant near $y'_0$ and $\Sigma$, then the knowledge of $N^{\Omega}_{\sigma}(x',y')$, for every $x',y'\in\Sigma$ uniquely determines $\sigma(y'_0)$.
\end{lemma}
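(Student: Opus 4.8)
The plan is to combine the pointwise reconstruction of Lemma~\ref{lemma tau tangential} with the non-flatness of $\Sigma$: the observation is that the tangential minors $g_{(n-1)}(y')$, taken over a genuinely curved portion of boundary, sweep out enough directions to pin down the full metric at $y'_0$. First I would pick a neighbourhood $V\subset\Sigma$ of $y'_0$ small enough that $\sigma$ equals a constant $\sigma_0$ on a full neighbourhood (in $\overline\Omega$) of $\overline V$, and set $g_0=(\det\sigma_0)^{\frac{1}{n-2}}\sigma_0^{-1}$, the associated constant metric of \eqref{tau}. For each $y'\in V$ the hypotheses of Lemma~\ref{lemma tau tangential} are satisfied — $\partial\Omega$ is $C^{1,\alpha}$ near $y'$, $\sigma$ is (trivially) $C^\alpha$ there, and, $\Sigma$ being open, the given data include $N^\Omega_\sigma(\cdot,y')$ on a boundary neighbourhood of $y'$ — so the data determine the restriction of the quadratic form $q(\xi):=g_0\,\xi\cdot\xi$ to the tangent hyperplane $T_{y'}(\partial\Omega)$, regarded as a linear subspace of $\mathbb{R}^n$. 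Letting $y'$ range over $V$, the data therefore determine $q$ on the set $\mathcal{T}:=\bigcup_{y'\in V}T_{y'}(\partial\Omega)$.

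The key step is to show that $\mathcal{T}$ has nonempty interior in $\mathbb{R}^n$. Represent $\Sigma$ near $y'_0=0$ as $x_n=\varphi(x')$, $x'\in B'$, with $\varphi\in C^{1,\alpha}$, $\varphi(0)=|\nabla\varphi(0)|=0$, so that $(\xi',\xi_n)\in T_{(x',\varphi(x'))}(\partial\Omega)$ if and only if $\xi_n=\nabla\varphi(x')\cdot\xi'$. By Definition~\ref{definition D-N}, non-flatness near $y'_0$ means $\varphi\not\equiv 0$ on any neighbourhood of $0$; since $\varphi(0)=0$ this forces $\nabla\varphi$ to be non-constant on every ball $B'_\rho\subset B'$ (otherwise $\nabla\varphi\equiv\nabla\varphi(0)=0$ there, whence $\varphi\equiv 0$). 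Fix such a ball and points at which $\nabla\varphi$ takes two distinct values $p_a\ne p_b$; for $\xi'$ in some nonempty open $W\subset\mathbb{R}^{n-1}$ we still have $p_a\cdot\xi'\ne p_b\cdot\xi'$, and, $B'_\rho$ being connected and $\nabla\varphi$ continuous, $\{\nabla\varphi(x')\cdot\xi':x'\in B'_\rho\}$ is an interval containing both $p_a\cdot\xi'$ and $p_b\cdot\xi'$. Hence $\mathcal{T}$ contains the nonempty open set $\{(\xi',\xi_n):\xi'\in W,\ \min(p_a\cdot\xi',p_b\cdot\xi')<\xi_n<\max(p_a\cdot\xi',p_b\cdot\xi')\}$.

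To conclude, $q$ is a polynomial of degree two in $\xi$, so its values on a nonempty open subset of $\mathbb{R}^n$ determine it everywhere, hence determine the symmetric matrix $g_0$; since $n>2$ the relation \eqref{tau} inverts (one recovers $\sigma_0=(\det g_0)^{1/2}g_0^{-1}$), so $\sigma(y'_0)=\sigma_0$ is uniquely determined by the data. The hard part is the geometric claim that $\mathcal{T}$ has nonempty interior: only the bare non-constancy of $\nabla\varphi$ is available from non-flatness, and one has to squeeze an open set of directions out of it by the connectedness argument above, rather than appealing to any nondegeneracy of the second fundamental form of $\Sigma$.
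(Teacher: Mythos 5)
Your proof is correct, and while it starts exactly as the paper does, it finishes by a genuinely different route. Both arguments begin by applying Lemma~\ref{lemma tau tangential} at every point $y'$ of a neighbourhood $V\subset\Sigma$ of $y'_0$ on which $\sigma$ equals a constant $\sigma_0$, thereby learning the quadratic form $\xi\mapsto g_0\xi\cdot\xi$ on each tangent hyperplane $T_{y'}(\partial\Omega)$. The paper then normalizes $T_{y'_0}(\partial\Omega)=\Pi_n$ and splits into two cases according to whether the normal deflects only in the $e_{n-1}$ direction or also in an independent direction; in case (a) it builds a one-parameter family of tangent planes along a path $Q(t)$ and reads off $g_{i,n}$, $g_{n-1,n}$, $g_{n,n}$ from the coefficients of explicit polynomials in $t$, while in case (b) a single suitably tilted tangent plane with $\alpha\neq 0$ suffices. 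You replace this case analysis by one geometric claim --- that $\bigcup_{y'\in V}T_{y'}(\partial\Omega)$ has nonempty interior --- proved from the bare non-constancy of $\nabla\varphi$ (which is all that non-flatness together with $\varphi(0)=|\nabla_{x'}\varphi(0)|=0$ provides) via connectedness and the intermediate value property, and you then conclude by the rigidity of a quadratic form known on an open set, recovering $\sigma_0=(\det g_0)^{1/2}g_0^{-1}$ since $n>2$. Your version is more uniform and avoids the (a)/(b) dichotomy altogether; the paper's version is more explicit about which measurement pins down which entry of $g$, bookkeeping that would be useful if one wanted to upgrade uniqueness to a quantitative stability estimate. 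Both arguments rely, in the same implicit way, on $\sigma$ being constant on a neighbourhood of $\Sigma$ large enough to contain the points where the tangent plane actually tilts (the paper assumes $g(Q(t))=g$ along the whole path; you assume $\sigma\equiv\sigma_0$ near $\overline V$ with $V$ chosen to see the non-flatness), so no gap is introduced relative to the paper on that score.
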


\begin{proof}
We denote by $\{e_1,\dots , e_n\}$ the canonical basis in $\mathbb{R}^n$. We assume, without loss of generality, that $y'_0=0\in\Sigma$, that the tangent space to $\partial\Omega$ at $0\in\Sigma$ is $T_0 (\partial\Sigma)=\Pi_n=<e_1,\dots , e_{n-1}>$ and the outer unit normal to $\partial\Omega$ at $0$ is $-e_n$. For any $P\in\partial\Omega$, we will denote by $\nu(P)$ the outer unit normal to $\partial\Omega$ at $P$ ($\nu(0)=-e_n$). If $\Sigma$ is not flat near $0$, then there are points $P\in\Sigma$ nearby such that $\nu(P)$ slightly deflects from $\nu(0)=-e_n$, therefore without loss of generality we can assume that there exists a point $P\in\Sigma$ and some $\varepsilon\neq 0$ such that

\begin{equation}\label{normal near 0 case 1}
\nu(P)=\frac{1}{\sqrt{1+\varepsilon^{2}}}\left(-e_n+\varepsilon e_{n-1}\right).
\end{equation}

Depending on the geometry of $\Sigma$ near $0$, there is an alternative:

\begin{enumerate}[(a)]

\item The deflection of $\nu$ is everywhere in the $e_{n-1}$ direction.

\item There are points $\widetilde{P}\in\Sigma$ near $0$ in which the deflection of $\nu$ is in a direction independent of $e_{n-1}$ and without loss of generality we can assume that there is a point $\widetilde{P}\in\Sigma$ and some $\alpha,\beta\in\mathbb{R}$, with $\alpha\neq 0$ such that

\begin{equation}\label{normal near 0 case 2}
\nu(\widetilde{P})=\frac{1}{\sqrt{1+\alpha^{2}+\beta^{2}}}\left(-e_n+\alpha e_{n-2}+\beta e_{n-1}\right).
\end{equation}
\end{enumerate}

Next, we show that in either cases (a) and (b), $g(0)$ (hence $\sigma(0)$) can be uniquely determined. We denote by

\[g=g(0)\]

and start with case (a). In this case an orthonormal basis for the tangent space $T_P(\Sigma)$ is given by

\begin{equation}\label{basis tangent at P}
\left\{e_1,\dots e_{n-2}, \frac{1}{\sqrt{1+\varepsilon^2}}\:e_{n-1} + \frac{\varepsilon}{\sqrt{1+\varepsilon^2}}\:e_{n}\right\}.
\end{equation}

Suppose $\varepsilon>0$. By continuity, we can find a continuous path $Q=Q(t)$, for $0<t<\varepsilon$ along $\Sigma$ such that $Q(0)=0$, $Q(\varepsilon)=P$, $g(Q(t))=g$, $0<t<\varepsilon$ and such that an orthonormal basis for the tangent space $T_{Q(t)}(\Sigma)$ is given by

\begin{equation}\label{basis tangent at Q(t)}
\left\{e_1,\dots e_{n-2}, \frac{1}{\sqrt{1+t^2}}\:e_{n-1} + \frac{t}{\sqrt{1+t^2}}\:e_{n}\right\}.
\end{equation}

Recalling that by Lemma \ref{lemma tau tangential} we know

\begin{equation}\label{tau tangential}
g v_i\cdot v_j ,\qquad i,j=1,\dots, n-1,
\end{equation}

for all $v_i$, $i=1,\dots , n-1$, forming a basis for $T_{Q(t)}\Sigma$, for any $t$, $0<t<\varepsilon$, we have that  the following functions

\begin{eqnarray}
& &g e_i\cdot\left( \frac{1}{\sqrt{1+t^2}}\:e_{n-1} + \frac{t}{\sqrt{1+t^2}}\:e_{n}\right),\label{info tau 1}\\
& &g\left(\frac{1}{\sqrt{1+t^2}}\:e_{n-1} + \frac{t}{\sqrt{1+t^2}}\:e_{n}\right)\cdot\left(\frac{1}{\sqrt{1+t^2}}\:e_{n-1} + \frac{t}{\sqrt{1+t^2}}\:e_{n}\right)\label{info tau 2}
\end{eqnarray}

are known for any $i=1,\dots , n-1$ and any $t$, $0<t<\varepsilon$. From \eqref{info tau 1} we obtain that the function

\begin{equation}
g_{i,\:n-1}+tg_{i,\:n}
\end{equation}

is known for any any $t$, $0<t<\varepsilon$, for any $i=1,\dots , n-2$ and hence $g_{i,\:n}$ is known for any $i=1,\dots , n-2$. From \eqref{info tau 2} we obtain that the polynomial

\begin{equation}
g_{n-1,\:n-1}+2tg_{n-1,\:n}+t^2g_{n,n}
\end{equation}

is known for any $t$, $0<t<\varepsilon$, hence all of its coefficients are known , in particular $g_{n-1,\:n}$ and $g_{n,n}$ are known too, therefore the full matrix $g$ is determined in case (a).

Next, we consider case (b). For $\widetilde{P}$ near $0$, we have that

\[g(\widetilde{P})=g\]

and that $g_{i,j}$ is known for any $i,j=1,\dots , n-1$ by Lemma \ref{lemma tau tangential}. $g_{i,n}$ is also known for $i=1,\dots , n-2$ by recalling that the following scalar product

\begin{equation*}
g e_i\cdot\left(\frac{1}{\sqrt{1+\varepsilon^2}}\:e_{n-1} + \frac{t}{\sqrt{1+\varepsilon^2}}\:e_{n}\right)
\end{equation*}

is known. To determine the remaining entries $g_{n-1,n}$ , $g_{n,n}$ of the matrix $g$, we note that a basis for the tangent space $T_{\widetilde{P}}\Sigma$ is given by

\begin{equation}\label{basis tangent at Q}
\left\{e_1,\dots e_{n-3}, e_{n-2}+\alpha e_n, e_{n-1}+\beta e_n\right\}.
\end{equation}

The following expressions

\begin{eqnarray}
& & g\left(e_{n-2}+\alpha e_n\right)\cdot\left(e_{n-2}+\alpha e_n\right),\label{info v v}\\
& & g\left(e_{n-1}+\beta e_n\right)\cdot\left(e_{n-2}+\alpha e_n\right)\label{info w v}
\end{eqnarray}

are known and from \eqref{info v v}, \eqref{info w v} we recover that the following expressions

\begin{eqnarray}
& &g_{n-2,n-2}+2\alpha g_{n-2,n}+\alpha^{2}g_{n,n},\label{tau n,n}\\
& &g_{n-1,n-2}+\beta g_{n,n-2}+\alpha g_{n-1,n}+\alpha\beta g_{n,n}\label{tau n-1,n}
\end{eqnarray}

are known too. From \eqref{tau n,n}, recalling that $g_{n-2,n-2},g_{n-2,n}$ are known and that $\alpha\neq 0$, we determine $g_{n,n}$. From \eqref{tau n-1,n}, recalling that

\[g_{n-1,n-2},g_{n,n-2},g_{n,n}\]

are known and again that $\alpha\neq 0$, we determine $g_{n-1,n}$, hence the matrix $g$ is completely determined in this case too.

\end{proof}

\begin{definition}
Given distinct points $x,y,w,z\in\Sigma$, we define

\begin{equation}\label{def K}
K_{\sigma}(x,y,w,z)=N_{\sigma}(x,y)-N_{\sigma}(x,w)-N_{\sigma}(z,y)+N_{\sigma}(z,w).
\end{equation}

\end{definition}

Note that, fixing $w,z\in\Sigma$, $K_{\sigma}$, as a function of $x$, $y$, has the same asymptotic behaviour of $N_{\sigma}(x,y)$ as $x\rightarrow y$.

\begin{remark}
It is well-known that the knowledge of the full N-D map is equivalent to the knowledge of the boundary values of the Neumann kernel. It can also be verified that the local knowledge of the kernel implies knowing the local N-D map. Here we make precise the adjustments needed in the local determination of the kernel from the knowledge of the local map. The following lemma states that from $\mathcal{N}^{\Sigma}_{\sigma}$ one can determine locally $N_{\sigma}(x,y)$ up to a bounded function which is the sum of two terms $N_{\sigma}(x,w)$, $N_{\sigma}(z,y)-N_{\sigma}(z,w)$, one depending on $x$ only and the other depending on $y$ only.
\end{remark}

\begin{lemma}\label{lemma neumann map and K}
$\mathcal{N}^{\Sigma}_{\sigma}$ is known if and only if $K_{\sigma}$ is known for any $x,y,w,z\in\Sigma$.
\end{lemma}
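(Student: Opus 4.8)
The plan is to prove the two implications of the equivalence separately, exploiting the fact that, after subtracting off the "reference" terms built from the fixed points $w$ and $z$, the combination $K_{\sigma}$ carries exactly the information that survives in passing from the global N-D map to its local restriction $\mathcal{N}^{\Sigma}_{\sigma}$. Recall that the global Neumann kernel $N^{\Omega}_{\sigma}$ is only determined up to an additive constant by its defining boundary value problem, and that the normalization $\int_{\partial\Omega}N^{\Omega}_{\sigma}(\cdot,y)\,dS=0$ was imposed to fix this. The point of forming $K_{\sigma}(x,y,w,z)$ is that it is manifestly invariant under adding to $N^{\Omega}_{\sigma}(\cdot,y)$ any function of $y$ alone (and, by symmetry \eqref{symmetry of N}, any function of $x$ alone), hence $K_{\sigma}$ is the natural normalization-free object to work with when only boundary data on $\Sigma$ are available.

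First I would show that knowledge of $\mathcal{N}^{\Sigma}_{\sigma}$ determines $K_{\sigma}$. Fix $w,z\in\Sigma$ and pick distinct $x,y\in\Sigma$. For a parameter $\varepsilon>0$ small, consider the current density $\psi_{\varepsilon}^{x,z}\in\,_{0}H^{-\frac{1}{2}}(\Sigma)$ which is an approximation of $\delta_x-\delta_z$ (mollified so as to lie in the appropriate Sobolev space and have zero average on $\partial\Omega$, which is automatic since it is a difference of two unit masses). Likewise form $\psi_{\varepsilon}^{y,w}$. By the characterization \eqref{ND globale} of $\mathcal{N}_{\sigma}$ and the fact that the solution $u$ of the Neumann problem \eqref{N bvp} with datum $\delta_x-\delta_z$ is exactly $N^{\Omega}_{\sigma}(\cdot,x)-N^{\Omega}_{\sigma}(\cdot,z)$ (up to the additive constant fixed by the normalization), one computes
\begin{equation*}
\langle \psi_{\varepsilon}^{x,z},\,\mathcal{N}_{\sigma}^{\Sigma}\,\psi_{\varepsilon'}^{y,w}\rangle
\;\longrightarrow\;
N^{\Omega}_{\sigma}(x,y)-N^{\Omega}_{\sigma}(x,w)-N^{\Omega}_{\sigma}(z,y)+N^{\Omega}_{\sigma}(z,w)
=K_{\sigma}(x,y,w,z)
\end{equation*}
as $\varepsilon,\varepsilon'\to 0$; the additive constants and the $\frac{1}{|\partial\Omega|}$ terms cancel precisely because we have taken differences at $x,z$ and at $y,w$. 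Since all test densities used are supported in $\Sigma$, the left-hand side depends only on $\mathcal{N}^{\Sigma}_{\sigma}$, so $K_{\sigma}$ is recovered. The delicate point here is the limiting argument: one must justify convergence of the pairing when the mollified densities concentrate, which follows from the local regularity of $N^{\Omega}_{\sigma}$ away from the diagonal (continuity up to the boundary, as noted in the Remark following \eqref{symmetry of N}) together with the boundedness of $\mathcal{N}_{\sigma}$ on the relevant spaces; alternatively one works with difference quotients of the kernel directly, which is the cleaner route.

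Conversely, I would show that if $K_{\sigma}$ is known for all quadruples of distinct points in $\Sigma$, then $\mathcal{N}^{\Sigma}_{\sigma}$ is determined. Given $\varphi,\psi\in\,_{0}H^{-\frac{1}{2}}(\Sigma)$, the value $\langle\mathcal{N}^{\Sigma}_{\sigma}\varphi,\psi\rangle=\langle\mathcal{N}_{\sigma}\varphi,\psi\rangle$ is obtained by integrating the kernel: writing $u$ for the solution of \eqref{N bvp} with Neumann datum $\varphi$, one has $u(x)=\int_{\Sigma}N^{\Omega}_{\sigma}(x,y)\,d\varphi(y)$ on $\partial\Omega$ (the average-zero condition on $\varphi$ kills the ambiguous constant and the $\frac{1}{|\partial\Omega|}$ flux term), whence $\langle\mathcal{N}_{\sigma}\varphi,\psi\rangle=\int_{\Sigma}\int_{\Sigma}N^{\Omega}_{\sigma}(x,y)\,d\varphi(y)\,d\psi(x)$. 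Now because both $\varphi$ and $\psi$ have zero average — indeed zero total mass on $\Sigma$ — any term in $N^{\Omega}_{\sigma}(x,y)$ that splits as $a(x)+b(y)$ contributes nothing to this double integral. Fixing once and for all reference points $w,z\in\Sigma$, we may replace $N^{\Omega}_{\sigma}(x,y)$ by $K_{\sigma}(x,y,w,z)$ in the double integral without changing its value, since the discarded terms $N_{\sigma}(x,w)+N_{\sigma}(z,y)-N_{\sigma}(z,w)$ are exactly of the form $a(x)+b(y)+\text{const}$. Hence $\langle\mathcal{N}^{\Sigma}_{\sigma}\varphi,\psi\rangle=\int_{\Sigma}\int_{\Sigma}K_{\sigma}(x,y,w,z)\,d\varphi(y)\,d\psi(x)$, which is manifestly determined by $K_{\sigma}$. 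By density of (differences of) point masses, and by polarization, this determines the bilinear form $\mathcal{N}^{\Sigma}_{\sigma}$ completely.

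The main obstacle in both directions is the same analytic subtlety: rigorously relating the operator $\mathcal{N}_{\sigma}$, which a priori is defined only as a bounded map between fractional Sobolev spaces via the variational identity \eqref{ND globale}, to pointwise values of the Neumann kernel, when point masses $\delta_x-\delta_z$ do not belong to $\,_{0}H^{-1/2}$. The resolution is to work with mollified currents and pass to the limit using the off-diagonal continuity and the gradient bound $|\nabla_x N^{\Omega}_{\sigma}(x,y)|\le C|x-y|^{1-n}$ established in the proof of Theorem \ref{theorem neumann function holder} (valid near the $C^{1,\alpha}$ portion of $\Sigma$), which gives the uniform integrability needed for the limits; everywhere the cancellation of the additive-constant ambiguity is guaranteed precisely by the zero-average constraints built into the spaces $\,_{0}H^{\pm1/2}$ and by the double-difference structure of $K_{\sigma}$.
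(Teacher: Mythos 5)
Your proposal is correct and follows essentially the same route as the paper: you express the pairing $\langle\psi,\mathcal{N}^{\Sigma}_{\sigma}\varphi\rangle$ as a double integral of the kernel against zero-average densities, observe that $N_{\sigma}-K_{\sigma}$ splits into single-variable terms annihilated by those densities, and recover $K_{\sigma}$ pointwise in the converse direction via approximate Dirac deltas $\delta_{\varepsilon}(\cdot;x)-\delta_{\varepsilon}(\cdot;z)$ and $\delta_{\varepsilon}(\cdot;y)-\delta_{\varepsilon}(\cdot;w)$. The only difference is cosmetic (you treat the two implications in the opposite order and spell out the limiting/continuity justifications that the paper leaves implicit).
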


\begin{proof}
For any $\varphi,\psi\in{C}^{0,1}_{0}(\Sigma)\cap_{0}H^{-\frac{1}{2}}$ we have

\begin{eqnarray}
\left<\psi, \mathcal{N}^{\Sigma}_{\sigma}\varphi\right> &=& \int_{\Sigma}\psi(\xi)dS(\xi)\int_{\Sigma} N_{\sigma}(\xi,\eta)\varphi(\eta)dS(\eta)\label{relation neumann map and K 1}\\
&=& \int_{\Sigma\times\Sigma}N_{\sigma}(\xi,\eta)\psi(\xi)\varphi(\eta)dS(\xi)\:dS(\eta)\label{relation neumann map and K 2}.
\end{eqnarray}

Note that the right hand side of

\begin{equation}\label{N-K}
N_{\sigma}(\xi,\eta)-K_{\sigma}(\xi,\eta,w,z)=N_{\sigma}(\xi,w)+N_{\sigma}(z,\eta)-N_{\sigma}(z,w)
\end{equation}

is a sum of terms which depend on at most one of the two variables $\xi$ and $\eta$. Recalling that $\varphi,\psi$ have zero average it follows that $N_{\sigma}(\xi,\eta)-K_{\sigma}(\xi,\eta,w,z)$ is orthogonal to $\psi(\xi)\varphi(\eta)$ in $L^2(\Sigma\times\Sigma)$, therefore \eqref{relation neumann map and K 2} leads to

\begin{equation}\label{relation neumann map and K final}
\left<\psi, \mathcal{N}^{\Sigma}_{\sigma}\varphi\right> = \int_{\Sigma\times\Sigma}K_{\sigma}(\xi,\eta,w,z)\psi(\xi)\varphi(\eta)\:dS(\xi)\:dS(\eta).
\end{equation}

Hence $K_{\sigma}$ uniquely determines $N^{\Sigma}_{\sigma}$. Vice versa, we pick

\begin{eqnarray*}
\psi(\xi) &=& \delta_{\varepsilon}(\xi;x)-\delta_{\varepsilon}(\xi;z),\\
\varphi(\eta) &=& \delta_{\varepsilon}(\eta;y)-\delta_{\varepsilon}(\eta;w),
\end{eqnarray*}

where $\delta_{\varepsilon}$ are approximate Dirac's delta functions on $\Sigma$ centered on the second argument. From \eqref{relation neumann map and K 2}, by letting $\varepsilon\rightarrow 0$ we can determine

\begin{equation*}
K_{\sigma}(x,y,w,z),
\end{equation*}

which concludes the proof.
\end{proof}


\section{Proof of the main result}

\begin{proof}[Proof of Theorem \ref{teorema principale}] Without loss of generality we can assume

\[\Sigma=\Sigma_1.\]

Let $\sigma^{(i)}$, for $i=1,2$ be two conductivities of type \eqref{conduttivita anisotrope} satisfying \eqref{unifellip}. If

\[\mathcal{N}^{\Sigma_1}_{\sigma^{(1)}}=\mathcal{N}^{\Sigma_1}_{\sigma^{(2)}},\]

then

\begin{equation}\label{global uniqueness2}
\sigma^{(1)}=\sigma^{(2)},\qquad\textnormal{in}\quad D_1.
\end{equation}

We shall proceed by induction. Let $D_K$ be a subdomain of $\Omega$, with $K\neq 1$ and recall that there exist $j_1,\dots , j_K\in\{1,\dots , N\}$ such that

\[D_{j_1}=D_1,\dots D_{j_K}=D_K,\]

with $D_{j_1},\dots D_{j_K}$ satisfying assumption $4(d)$. For simplicity, we rearrange the indices of these subdomains so that the above mentioned chain is simply denoted by $D_1, \dots, D_K, K\le N$. We assume that

\begin{equation}\label{sigma on the first K domains}
\sigma^{(1)}=\sigma^{(2)},\qquad\textnormal{in}\quad D_i,\qquad\textnormal{for\:every}\:i,\quad 1\leq i\leq K
\end{equation}

and show that

\[\sigma^{(1)}=\sigma^{(2)},\qquad\textnormal{in}\quad D_{K+1}\:\textnormal{too}.\]

We shall set

\[D=\left(\bigcup_{i=1}^{K}\overline{D_i}\right)^{\circ};\qquad E=\Omega\setminus\overline{D}.\]

We shall denote by $\mathcal{N}^{\Sigma_{K+1}}_{\sigma^{(i)}}$ the local N-D map for the domain $E$ relative to the conductivity $\sigma^{(i)}$ and localized on $\Sigma_{K+1}$, for $i=1,2$.

\begin{claim}\label{claim maps}
If $\mathcal{N}^{\Sigma_1}_{\sigma^{(1)}}=\mathcal{N}^{\Sigma_1}_{\sigma^{(2)}}$ and $\sigma^{(1)}=\sigma^{(2)}$ in $D$ then $\mathcal{N}^{\Sigma_{K+1}}_{\sigma^{(1)}}=\mathcal{N}^{\Sigma_{K+1}}_{\sigma^{(2)}}$.
\end{claim}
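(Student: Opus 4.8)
The plan is to show that the local N-D map on $\Sigma_{K+1}$ for the exterior domain $E$ is \emph{determined} by the local N-D map on $\Sigma_1$ for $\Omega$ together with the knowledge of $\sigma^{(i)}$ on $D$, since the latter two are identical for $i=1,2$ by hypothesis. The key observation is that solving the Neumann problem on $E$ with data supported on $\Sigma_{K+1}\subset\partial E$ can be realized as solving the Neumann problem on the whole of $\Omega$ with carefully chosen data supported on $\Sigma_1$: because $\sigma^{(i)}$ is known on $D$ and $D$ is attached to $\Sigma_1$ along the chain, one can "transfer" Neumann data across $D$. Concretely, given $\psi\in{}_0H^{-\frac12}(\Sigma_{K+1})$, I would look for a current $\widetilde\psi$ supported on $\Sigma_1$ such that the corresponding solution $u$ of $\mathrm{div}(\sigma^{(i)}\nabla u)=0$ in $\Omega$, when restricted to $E$, solves the Neumann problem on $E$ with flux $\psi$ on $\Sigma_{K+1}$ and zero flux on $\partial E\setminus(\Sigma_{K+1}\cup(\partial E\cap\partial D))$; the matching conditions along the interface $\partial D\cap\partial E$ are automatically satisfied because $u$ solves the equation across that interface.

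The steps, in order, would be: \textbf{(i)} Fix $i$ and $\psi\in{}_0H^{-\frac12}(\Sigma_{K+1})$. Using that $D$ is a Lipschitz domain with $\Sigma_1\subset\partial D$ and that $\sigma^{(i)}|_D$ is known, set up an auxiliary transmission problem: find $u^{(i)}$ on $\Omega$ solving $\mathrm{div}(\sigma^{(i)}\nabla u^{(i)})=0$ in $D$ and in $E$, with $\sigma^{(i)}\nabla u^{(i)}\cdot\nu=\psi$ on $\Sigma_{K+1}$, with $\sigma^{(i)}\nabla u^{(i)}\cdot\nu=0$ on $\partial\Omega\setminus\overline{\Sigma_1}$, and with the natural transmission (continuity of $u$ and of the conormal derivative) across all interior interfaces including $\partial D\cap\partial E$. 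This is just the global Neumann problem for $\sigma^{(i)}$ on $\Omega$ with boundary flux $\widetilde\psi:=\sigma^{(i)}\nabla u^{(i)}\cdot\nu|_{\Sigma_1}$, which is supported in $\overline{\Sigma_1}$ and has zero average, hence lies in ${}_0H^{-\frac12}(\Sigma_1)$. \textbf{(ii)} Observe that $\widetilde\psi$ depends only on $\psi$ and on $\sigma^{(i)}|_D$ (through the Dirichlet-to-Neumann-type operator of $D$ with one face $\Sigma_1$), hence $\widetilde\psi^{(1)}=\widetilde\psi^{(2)}=:\widetilde\psi$ because $\sigma^{(1)}=\sigma^{(2)}$ on $D$. \textbf{(iii)} Apply $\mathcal N^{\Sigma_1}_{\sigma^{(1)}}=\mathcal N^{\Sigma_1}_{\sigma^{(2)}}$ to the current $\widetilde\psi$ paired against an arbitrary test current of the same type to get that $u^{(1)}$ and $u^{(2)}$ have the same Cauchy data on $\Sigma_1$; then, using again that $\sigma^{(1)}=\sigma^{(2)}$ on $D$ and unique continuation / uniqueness for the transmission problem on $D$, conclude $u^{(1)}=u^{(2)}$ on $D$ and in particular their Cauchy data agree on $\partial D\cap\partial E$. \textbf{(iv)} Finally, pair $\psi$ against another arbitrary $\psi'\in{}_0H^{-\frac12}(\Sigma_{K+1})$ using the characterization \eqref{ND globale} of the N-D map as an energy (bilinear) form: $\langle\psi',\mathcal N^{\Sigma_{K+1}}_{\sigma^{(i)}}\psi\rangle$ is expressible via the global energy of $u^{(i)}$ on $\Omega$ minus the energy on $D$ (which is the same for $i=1,2$), and the global quantity is controlled by $\mathcal N^{\Sigma_1}_{\sigma^{(i)}}$ applied to $\widetilde\psi,\widetilde\psi'$; since all ingredients coincide for $i=1,2$, we get $\mathcal N^{\Sigma_{K+1}}_{\sigma^{(1)}}=\mathcal N^{\Sigma_{K+1}}_{\sigma^{(2)}}$.

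The main obstacle I anticipate is handling the \emph{boundary and interface bookkeeping} rigorously: one must check that the flux $\widetilde\psi$ induced on $\Sigma_1$ genuinely has support in $\overline{\Sigma_1}$ (no spurious contribution leaks onto $\partial\Omega\setminus\overline{\Sigma_1}$, which is why the homogeneous Neumann condition on $\partial D\cap\partial\Omega$ outside $\Sigma_1$ must be imposed and is consistent with assumption $4(d)$ that $D$ and $E$ are Lipschitz domains sharing the chain interfaces), and that the zero-average normalization is preserved. A second delicate point is justifying that knowing $\mathcal N^{\Sigma_1}$ and $\sigma|_D$ really does pin down the Cauchy data of $u^{(i)}$ on the hidden interface $\partial D\cap\partial E$: this requires the well-posedness of the "partial" Dirichlet/Neumann problem on $D$ (a Lipschitz domain) with data split between $\Sigma_1$ and the rest of $\partial D$, and an application of weak unique continuation across the curved $C^{1,\alpha}$ interfaces $\Sigma_2,\dots,\Sigma_K$ to propagate the equality $u^{(1)}=u^{(2)}$ from $\Sigma_1$ through the chain. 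Once these functional-analytic points are settled, the identity of the maps follows formally from Alessandrini's identity \eqref{Alessandrini identity local N-D} and the energy characterization.
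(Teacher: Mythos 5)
There is a genuine gap at the very first step of your plan, and it propagates through the rest of the argument. Your step (i) asks for a function $u^{(i)}$ on $\Omega$ that simultaneously (a) satisfies the natural transmission conditions (continuity of $u$ and of the conormal derivative) across $\partial D\cap\partial E$, and (b) restricts on $E$ to the solution of the Neumann problem with flux $\psi$ on $\Sigma_{K+1}$ and zero flux on the rest of $\partial E$. These requirements are incompatible: the solution of the $E$-Neumann problem defining $\mathcal N^{\Sigma_{K+1}}_{\sigma^{(i)}}$ has, by definition, vanishing conormal derivative on $(\partial D\cap\partial E)\setminus\Sigma_{K+1}$ and a prescribed one-sided flux $\psi$ on $\Sigma_{K+1}$, whereas a global solution on $\Omega$ generically has nonzero flux across $\partial D\cap\partial E$ and no jump there; so the ``transferred'' current $\widetilde\psi$ is not produced by any well-posed problem built from the available data. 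Even setting this aside, step (ii) is circular: whatever global problem you pose on $\Omega$, its solution --- and hence the trace $\sigma^{(i)}\nabla u^{(i)}\cdot\nu|_{\Sigma_1}$ --- depends on $\sigma^{(i)}$ in $E$ as well as in $D$, so you cannot conclude $\widetilde\psi^{(1)}=\widetilde\psi^{(2)}$ from $\sigma^{(1)}=\sigma^{(2)}$ on $D$ alone; equality of the conductivities on $E$ is precisely what the overall induction is trying to establish. Similarly, in step (iii) the hypothesis $\mathcal N^{\Sigma_1}_{\sigma^{(1)}}=\mathcal N^{\Sigma_1}_{\sigma^{(2)}}$ only gives information about solutions of the homogeneous equation in all of $\Omega$ with Neumann data supported in $\overline{\Sigma_1}$; your $u^{(i)}$ carries an interior source on $\Sigma_{K+1}$ and is not such a solution, so its Dirichlet data on $\Sigma_1$ cannot be read off from the hypothesis.

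The paper's proof circumvents exactly this difficulty by never attempting to extend the $E$-solution as a solution: it extends $u^{(i)}$ into $D$ by an arbitrary bounded extension operator $T$, represents $u^{(i)}$ on $E$ via the Neumann kernel $\widetilde N^{(i)}$ of an augmented domain $\Omega_0=\Omega\cup D_0$ (with $D_0$ attached outside $\Omega$ near $\Sigma_1$), and reduces the claim to showing that
\begin{equation*}
S(y,z)=\int_E\bigl(\sigma^{(1)}(x)-\sigma^{(2)}(x)\bigr)\nabla_x\widetilde N^{(1)}(y,x)\cdot\nabla_x\widetilde N^{(2)}(z,x)\,dx
\end{equation*}
vanishes for $y,z\in D$. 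This is achieved by first using Alessandrini's identity \eqref{Alessandrini identity local N-D} together with the hypothesis on $\mathcal N^{\Sigma_1}$ to get $S=0$ when the poles $y,z$ lie in $D_0$, outside $\Omega$, and then invoking unique continuation \emph{in the pole variables} across $\left(\overline D\cup\overline D_0\right)^{\circ}$ to sweep the poles down to $\Sigma_{K+1}$. That sweep of singular solutions is the ingredient missing from your outline: it is what replaces the (unavailable) direct transfer of Neumann data across $D$. If you wish to salvage your plan, the correct substitute for steps (i)--(ii) is a Runge-type density statement for the Cauchy data on $\partial D\cap\partial E$ of global solutions driven from $\Sigma_1$, which is essentially what the family of kernels $\widetilde N^{(i)}(y,\cdot)$ with $y$ ranging over $D_0$ provides in concrete form.
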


\noindent\textit{Proof of claim \ref{claim maps}.} Here we shall adapt some arguments already used in \cite{A-K}. Recall that up to a rigid transformation of coordinates we can assume that

\[P_1=0\qquad ;\qquad (\mathbb{R}^n\setminus\Omega)\cap B_{r_0}=\{(x^{\prime},x_n)\in B_{r_{0}}\:|\:x_n <\varphi(x^{\prime})\},\]

where $\varphi$ is a Lipschitz function such that

\[\varphi(0)=0\qquad\textnormal{and}\qquad ||\varphi||_{C^{0,1}(B_{r_{0}}^{\prime})}\leq Lr_0.\]

Denoting by

\[D_0=\left\{x\in(\mathbb{R}^n\setminus\Omega)\cap B_{r_0}\:\bigg|\:|x_i|<\frac{2}{3}r_0,\:i=1,\dots , n-1,\:\left|x_n-\frac{r_0}{6}\right|<\frac{5}{6}r_0\right\},\]

it turns out that the augmented domain $\Omega_0=\Omega\cup D_0$ is of Lipschitz class with constants $\frac{r_0}{3}$ and $\widetilde{L}$, where $\widetilde{L}$ depends on $L$ only. For any number $r\in \left(0,\frac{2}{3}r_0\right)$ we also denote

\[(D_0)_r = \left\{x\in D_0\:|\:dist(x,\Omega)>r\right\}.\]

For $i=1,2$ we consider the operator $L_i=\textnormal{div}(\sigma^{(i)}\nabla\cdot)$ in $\Omega$ and extend $\sigma^{(i)}$ to  $\widetilde{\sigma}^{(i)}$ on $\Omega_0$, by setting $\widetilde{\sigma}^{(i)}|_{D_0}=I$, where $I$ denotes the $n\times n$ identity matrix. For $y\in\Omega_0$ we define the modified Neumann kernel $\widetilde{N}_{\sigma^{(i)}}$ as the solution to

\begin{displaymath}
\left\{ \begin{array}{lll}
L_{i}\widetilde{N}_{\widetilde\sigma^{(i)}}^{\Omega}(\cdot,y)=-\delta(x-y), & \textnormal{in}\quad\Omega_{0}\\
\widetilde\sigma^{(i)}\nabla \widetilde{N}_{\widetilde\sigma^{(i)}}^{\Omega}\cdot\nu=0, & \textnormal{on}\quad\partial\Omega_{0}\cap\partial\Omega\\
\widetilde\sigma^{(i)}\nabla \widetilde{N}_{\widetilde\sigma^{(i)}}^{\Omega}\cdot\nu=-\frac{1}{\vert \partial\Omega_{0}\setminus\bar\Omega\vert}, & \textnormal{on}\quad\partial\Omega_{0}\setminus\bar\Omega.
\end{array} \right.
\end{displaymath}

Here we convene to normalize $\widetilde{N}_{\widetilde\sigma^{(i)}}^{\Omega}$, by prescribing

\[\int_{\partial\Omega_{0}} \widetilde{N}_{\widetilde\sigma^{(i)}}^{\Omega}(\cdot,y)\:dS(\cdot)=0.\]

Again, with this choice we obtain

\begin{equation}\label{symmetry Ni}
\widetilde{N}_{\widetilde\sigma^{(i)}}^{\Omega}(x,y) = \widetilde{N}_{\widetilde\sigma^{(i)}}^{\Omega}(y,x),\qquad\textnormal{for\:all}\quad x,y\in\Omega_0,\quad x\neq y.
\end{equation}

From now on we will simplify our notation by denoting

\[\widetilde{N}_{\widetilde\sigma^{(i)}}^{\Omega} = \widetilde{N}^{(i)}.\]

Given $\psi\in{C}^{0,1}(\partial E)$, with $\textnormal{supp}\psi\subset\Sigma^{K+1}$ and $\int_{\partial E}\eta =0$, we let $u^{(i)}$ solve

\begin{displaymath}
\left\{ \begin{array}{ll}
L_{i}u^{(i)}=0, & \textnormal{in}\quad E\\
\sigma^{(i)}\nabla u\cdot\nu=\psi, & \textnormal{on}\quad\partial{E}.
\end{array} \right.
\end{displaymath}

We consider a bounded extension operator

\[T:H^{\frac{1}{2}}(\partial E\cap\Omega)\longrightarrow H^{1}(\Omega),\]

such that, given $f\in H^{\frac{1}{2}}(\partial E\cap\Omega)$, we have

\[Tf\big\vert_{\Sigma_1}=0.\]

We denote

\begin{displaymath}
\overline{u}^{(i)}=\left\{ \begin{array}{ll}
u^{(i)}, & \textnormal{in}\quad E\\
T\left(u^{(i)}\big\vert_{\partial{E}\cap\Omega}\right), & \textnormal{in}\quad{D}.
\end{array} \right.
\end{displaymath}

Clearly $\bar{u}^{(i)}\in H^{1}(\Omega)$. For $x\in E$ we have

\begin{eqnarray}\label{u_i}
u^{(i)}(x) &=& -\int_{\Omega} \overline{u}^{(i)}(y)\:\textnormal{div}_y\left(\sigma^{(i)}(y)\nabla_y \widetilde{N}^{(i)}(y,x)\right)\:dy\nonumber\\
&=&-\int_{\partial\Omega} \overline{u}^{(i)}(y)\:\sigma^{(i)}(y)\nabla_y \widetilde{N}^{(i)}(y,x)\cdot\nu\:dS(y)\nonumber\\
&+&\int_{\Omega} \sigma^{(i)}(y)\nabla_{y}\overline{u}^{(i)}(y)\cdot \nabla_y \widetilde{N}^{(i)}(y,x)\:dy\nonumber\\
&=&\int_{E} \sigma^{(i)}(y)\nabla_{y}\overline{u}^{(i)}(y)\cdot \nabla_y \widetilde{N}^{(i)}(y,x)\:dy\nonumber\\
&+&\int_{D} \sigma^{(i)}(y)\nabla_{y}\overline{u}^{(i)}(y)\cdot \nabla_y \widetilde{N}^{(i)}(y,x)\:dy\nonumber\\
&=&\int_{\Sigma_{K+1}} \psi \widetilde{N}^{(i)}(y,x)\:dS(y) + \int_{D} \sigma^{(i)}(y)\nabla_{y}\overline{u}^{(i)}(y)\cdot \nabla_y \widetilde{N}^{(i)}(y,x)\:dy.
\end{eqnarray}

By differentiating under the integrals and by using Fubini, we form

\begin{eqnarray*}\label{product gradients}
& & \nabla_x u^{(1)}(x) \cdot \nabla_x u^{(2)}(x)\nonumber\\
& & =\int_{\Sigma_{K+1}\times\Sigma_{K+1}} \psi(y)\psi(z)\nabla_x\widetilde{N}^{(1)}(y,x)\cdot\nabla_x\widetilde{N}^{(2)}(z,x)\:dy\:dz\nonumber\\
& & +\int_{\Sigma_{K+1}\times{D}} \psi(y)\sigma^{(2)}_{lk}(z)\partial_{z_{l}}\overline{u}^{(2)}(z)\partial_{z_k}\left(\nabla_x\widetilde{N}^{(1)}(y,x)\cdot\nabla_x\widetilde{N}^{(2)}(z,x)\right)\:dy\:dz\nonumber\\
& & +\int_{D\times\Sigma_{K+1}} \psi(z)\sigma^{(1)}_{lk}(y)\partial_{y_{l}}\overline{u}^{(1)}(z)\partial_{y_k}\left(\nabla_x\widetilde{N}^{(2)}(z,x)\cdot\nabla_x\widetilde{N}^{(1)}(y,x)\right)\:dy\:dz\nonumber\\
& & +\int_{D\times D}\sigma^{(2)}_{lk}(z)\partial_{z_{l}}\overline{u}^{(2)}(z) \sigma^{(1)}_{nm}(y)\partial_{y_{n}}\overline{u}^{(1)}(z)\partial_{z_k}\partial_{y_m}\left(\nabla_x\widetilde{N}^{(2)}(z,x)\cdot\nabla_x\widetilde{N}^{(1)}(y,x)\right)\:dy\:dz.
\end{eqnarray*}

We define for $y,z\in D\cup D_0$

\begin{equation}\label{S}
S(y,z)=\int_{E} \left(\sigma^{(1)}(x)-\sigma^{(2)}(x)\right)\nabla_x\widetilde{N}^{(1)}(y,x)\cdot\nabla_x\widetilde{N}^{(2)}(z,x)\:dx.
\end{equation}

For any $y,z\in \left(\overline{D}\cup \overline{D}_0\right)^{\circ}$ we verify that

\begin{eqnarray}\label{S eps solutions}
& & \textnormal{div}_y\left(\sigma^{(1)}(y)\nabla_y\:S(y,z)\right) =0,\nonumber\\
& & \textnormal{div}_z\left(\sigma^{(2)}(z)\nabla_z\:S(y,z)\right) =0,
\end{eqnarray}

moreover

\begin{equation}\label{S as integral on Omega}
S(y,z) = \int_{\Omega} \left(\sigma^{(1)}(x)-\sigma^{(2)}(x)\right)\nabla_x\widetilde{N}^{(1)}(y,x)\cdot\nabla_x\widetilde{N}^{(2)}(z,x)=0
\end{equation}

because $\sigma^{(1)}=\sigma^{(2)}$ on $D$ by assumption. For $y,z\in D_{(\frac{r_0}{3})}$, being these singular points outside $\Omega$, by the identity \eqref{Alessandrini identity local N-D} we obtain

\[S(y,z)=\left<\sigma^{(1)}\nabla\widetilde{N}^{(1)}(y,\cdot)\cdot\nu,\left(\mathcal{N}_{\sigma^{(2)}}^{\Sigma_1} - \mathcal{N}_{\sigma^{(1)}}^{\Sigma_1}\right)\sigma^{(2)}\nabla \widetilde{N}^{(2)}(y,\cdot)\cdot\nu\right> = 0.\]

We recall that by the $C^{1,\alpha}$ regularity of the interfaces $\Sigma_{j_{k}}$ within $D$, $S(y,z)$  satisfies the unique continuation property in each variable $y,z\in \left(\overline{D}\cup \overline{D}_0\right)^{\circ}$ hence

\begin{equation}\label{S eps zero}
S(y,z)=0,\qquad\textnormal{for\:any}\quad y,z\in D.
\end{equation}

Consequently we obtain

\begin{eqnarray}\label{product gradients 2}
& & \int_{E} \left(\sigma^{(1)}(x)-\sigma^{(2)}(x)\right)\nabla_x u^{(1)}(x)\cdot\nabla_x u^{(2)}(x)\:dx\nonumber\\
 & & =\int_{\Sigma_{K+1}\times\Sigma_{K+1}} \psi(y)\psi(z)S(y,z)\:dy\:dz\nonumber\\
& & +\int_{\Sigma_{K+1}\times{D}} \psi(y)\sigma^{(2)}_{lk}(z)\partial_{z_{l}}\overline{u}^{(2)}(z)\partial_{z_k}S(y,z)\:dy\:dz\nonumber\\
& & +\int_{D\times\Sigma_{K+1}} \psi(z)\sigma^{(1)}_{lk}(y)\partial_{y_{l}}\overline{u}^{(1)}(z)\partial_{y_k}S(y,z)\:dy\:dz\nonumber\\
& & +\int_{D\times D}\sigma^{(2)}_{lk}(z)\partial_{z_{l}}\overline{u}^{(2)}(z) \sigma^{(1)}_{nm}(y)\partial_{y_{n}}\overline{u}^{(1)}(z)\partial_{z_k}\partial_{y_m}S(y,z)\:dy\:dz=0.
\end{eqnarray}

Hence

\begin{equation}\label{identification Neumann map K+1}
\left<\psi,\left(\mathcal{N}^{\Sigma_{K+1}}_{\sigma^{(1)}}-\mathcal{N}^{\Sigma_{K+1}}_{\sigma^{(2)}}\right)\:\psi\right> =  \int_{E}\left(\sigma^{(2)}(x)-\sigma^{(1)}(x)\right)\nabla_x u^{(1)}(x)\cdot\nabla_x u^{(2)}(x)\:dx =0,
\end{equation}

which concludes the proof of the claim.$\qquad\qquad\Box$\\

From $\mathcal{N}^{\Sigma_{K+1}}_{\sigma^{(1)}}=\mathcal{N}^{\Sigma_{K+1}}_{\sigma^{(2)}}$ and by Lemma \ref{lemma sigma constant} we obtain

\[\sigma^{(1)}(x)=\sigma^{(2)}(x),\qquad\textnormal{for\:any}\quad x\in \Sigma_{K+1},\]

hence

\[\sigma^{(1)}(x)=\sigma^{(2)}(x),\qquad\textnormal{for\:any}\quad x\in D_{K+1},\]

which concludes the proof.

\end{proof}

\begin{ex}\label{example}
Let $v=(v',v_n)\in\mathbb{R}^{n}_{+}$ be an arbitrary point (note that $v_n>0$). Consider the matrix

\[
\sbox0{{{\huge\mbox{{$I_{(n-1)}$}}}}}
M=\left(
\begin{array}{c|c}
\usebox{0}&\makebox[\wd0]{\Large $v'$}\\
\hline
  \vphantom{\usebox{0}}\makebox[\wd0]{\Large $0'^T$}&\makebox[\wd0]{\Large $v_n$}
\end{array}
\right),
\]

where we understand

\[v'= \left(
\begin{array}{c}
v_1\\
\vdots\\
v_{n-1}
\end{array}
\right)
\]

and $0'$ denotes the column null $(n-1)$-vector. $M$ is a linear transformation of $\mathbb{R}^{n}_{+}$ into itself which fixes the boundary $\Pi_n$. Following the calculations in the proof of Lemma \ref{lemma N and tau}, let us form

\[\sigma=\frac{QQ^{T}}{\det{Q}},\]

where $Q=M^{-1}$. $\sigma$ is the push-forward of the isotropic homogeneous conductivity $I$ through the change of coordinates $x=M\xi$. In this case

\begin{eqnarray*}
g=M^{T}M &=&  \sbox0{{{\huge\mbox{{$I_{(n-1)}$}}}}}
\left(
\begin{array}{c|c}
\usebox{0}&\makebox[\wd0]{\Large $0'$}\\
\hline
  \vphantom{\usebox{0}}\makebox[\wd0]{\Large $v'^{T}$}&\makebox[\wd0]{\Large $v_n$}
\end{array}
\right)\sbox0{{{\huge\mbox{{$I_{(n-1)}$}}}}}
\left(
\begin{array}{c|c}
\usebox{0}&\makebox[\wd0]{\Large $v'$}\\
\hline
  \vphantom{\usebox{0}}\makebox[\wd0]{\Large $0'^{T}$}&\makebox[\wd0]{\Large $v_n$}
\end{array}
\right)\\
&=&\sbox0{{{\huge\mbox{{$I_{(n-1)}$}}}}}
\left(
\begin{array}{c|c}
\usebox{0}&\makebox[\wd0]{\Large $v'$}\\
\hline
  \vphantom{\usebox{0}}\makebox[\wd0]{\Large $v'^{T}$}&\makebox[\wd0]{\Large $|v'|^{2}+v^{2}_n$}
\end{array}
\right).
\end{eqnarray*}

Hence

\[g_{(n-1)}=I_{(n-1)},\]

for any choice of $v\in\mathbb{R}^{n}_{+}$.\\

In other words, the whole family of anisotropic conductivities

\begin{equation*}
\sigma =\frac{QQ^{T}}{\det{Q}}=
%
%
%
v_n\sbox0{{{\huge\mbox{{$I_{(n-1)}$}}}}+\Large $\frac{1}{v^{2}_n}v'v'^{T}$}
\left(
\begin{array}{c|c}
\usebox{0}&\makebox[\wd0]{\Large $-\frac{1}{v^{2}_n}v'$}\\
\hline
  \vphantom{\usebox{0}}\makebox[\wd0]{\Large $-\frac{1}{v^{2}_n}v'^{T}$}&\makebox[\wd0]{\Large $\frac{1}{v^{2}_n}$}
\end{array}
\right)
\end{equation*}

is such that

\[N_{\sigma}(x',y')=N_{I}(x',y')\qquad\textnormal{for\:all}\quad x',y'\in\Pi_n.\]

That is, any such $\sigma$ is indistinguishable from the identity $I$ when the corresponding N-D map (or D-N map) on $\Pi_n$ is given.
\end{ex}

\section*{Acknowledgments}
Giovanni Alessandrini acknowledges support by FRA2014
`Problemi inversi per PDE, unicit\`a, stabilit\`a, algoritmi',
Universit\`a degli Studi di Trieste.

Maarten V. de Hoop acknowledges and sincerely thanks the Simons Foundation for financial support.

Romina Gaburro acknowledges the support of the Department of Mathematics and Statistics, University of Limerick, during a visit to Rice University, where
part of the research for the preparation of this paper was carried out in the Fall 2015/16.



\begin{thebibliography}{999}


\bibitem[A]{A} G. Alessandrini, Singular solutions of elliptic equations and the determination of conductivity by boundary measurements, J. Differential Equations, \textbf{84} (\textbf{2}) (1990), 252-272.

\bibitem[A-C]{A-Cab} G. Alessandrini  and E. Cabib, Determining the anisotropic traction state in a membrane by boundary measurements, Inverse Problems and Imaging \textbf{1} (\textbf{3})  (2007), 437-442.





\bibitem[A-G]{A-G} G. Alessandrini, R. Gaburro, Determining conductivity with special anisotropy by boundary measurements, SIAM J. Math. Anal. \textbf{33}  (2001), 153-171.

\bibitem[A-G1]{A-G1} G. Alessandrini, R. Gaburro, The local Calder\'{o}n problem and the determination at the boundary of the conductivity, Comm. Partial Differential Equations. \textbf{34}  (2009), 918-936.


\bibitem[A-K]{A-K} G. Alessandrini and K. Kim, Single-logarithmic stability for the Calder\'{o}n problem with local data, Journal of Inverse and Ill-Posed Problems \textbf{20} (\textbf{4}) (2012), 389-400.

\bibitem[A-V]{A-V} G. Alessandrini, S. Vessella, Lipschitz stability for the inverse conductivity problem, Advances in Applied Mathematics \textbf{35} (2005), 207-241.

\bibitem[Ar]{Ar} G. E. Archie, The electrical resistivity log as an aid in determining some reservoir characteristics, Petroleum Transactions of the AIME \textbf{146} (1942), 54-62.

\bibitem[As-P]{As-P} K. Astala and L. P\" {a}iv\" {a}rinta,
Calder\'{o}n's inverse conductivity problem in the plane. Annals of
Mathematics (2006), 265-299.

\bibitem[As-La-P]{As-La-P}  K. Astala, M. Lassas    and L. P\" {a}iv\"
{a}rinta, Calder\'on inverse problem for anisotropic conductivity in the
plane, Comm. Partial Differential Equations \textbf{30} (2005), 207-224.



\bibitem[Be]{Be} Belishev M. I. The Calder\'{o}n problem for two-dimensional manifolds by the BC-Method, SIAM J. Math. Anal. \textbf{35} (\textbf{1})(2003) , 172–182.




\bibitem[Be-Fr]{Be-Fr} E. Beretta, E. Francini, Lipschitz stability for the electrical impedance tomography problem: the complex case, Communications in Partial Differential Equations \textbf{36} (2011), 1723-1749.

\bibitem[B-G-M]{B-G-M}    M. Berger, P. Gauduchon and E. Mazet,
 {Le spectre d'une vari\'et\'e Riemannienne},
 {Lecture Notes in Mathematics, Vol. 194},
{Springer-Verlag, Berlin-New York},
      (1971).


\bibitem[Bo]{Bo} L. Borcea, Electrical impedance tomography, Inverse Problems \textbf{18} (2002), R99-R136.

\bibitem[B-U]{B-U}  A. L. Bukhgeim, G. Uhlmann, Recovery a potential from partial Cauchy data, Comm. Partial Differential Equations, \textbf{27} (\textbf{3} \& \textbf{4}) (2002), 653-668.

\bibitem[Bro]{Bro} R. M. Brown, Recovering the conductivity at the boundary from the local Dirichlet-to-Neumann map: a pointwise result, J. Inverse and Ill-posed Prob., Vol.\textbf{9}, No. \textbf{6} (2001), 567-574.

\bibitem[Bro-U]{Bro-U} R. M. Brown and G. Uhlmann. Uniqueness in the inverse conductivity problem with less regular conductivities in two dimensions, Commun. PDE. \textbf{22} (1997), 1009-1027.

\bibitem[C]{C} A. P. Calder\'{o}n, On an inverse boundary value problem, Seminar on Numerical Analysis and its Applications to Continuum Physics (Rio de Janeiro, 1980),   65--73, Soc. Brasil. Mat., Rio de Janeiro, 1980. Reprinted in: Comput. Appl. Math. \textbf{25}  (\textbf{2-3}) (2006), 133--138.

\bibitem[C-I-N]{C-I-N} M. Cheney, D. Isaacson and J. C. Newell, Electrical impedance tomography, SIAM Rev. \textbf{41} (\textbf{1}) (1999), 85-101.

\bibitem[Ca-R]{Ca-R} P. Caro and K. Rogers, Global uniqueness for the Calder\'{o}n problem with Lipschitz conductivities, Forum of Mathematics, Pi. \textbf{4} (2016) Cambridge University Press.

\bibitem[D]{D} M. Di Cristo, Stable determination of an inhomogeneous inclusion by local boundary measurements, J. Computational and Applied Mathematics, \textbf{198} (2007), 414--425.

\bibitem[D1]{D1} V. Druskin, The unique solution of the inverse problem of electrical surveying and electrical well-logging for piecewise-continuous conductivity, Izv. Earth Phys. \textbf{18} (1982), 51-53 (in Russian).

\bibitem[D2]{D2} V. Druskin, On uniqueness of the determination of the three-dimensional underground structures from surface measurements with variously positioned steady-state or monochromatic field sources, Sov. Phys.-Solid Earth \textbf{21} (1985), 210-214 (in Russian).

\bibitem[D3]{D3} V. Druskin, On the uniqueness of inverse problems from incomplete boundary data, SIAM J. Appl. Math. \textbf{58} (\textbf{5}) (1998), 1591-1603.

\bibitem[F-K-R]{F-K-R}  D. Faraco, Y. Kurylev and A. Ruiz, G-convergence, Dirichlet to Neumann maps and invisibility, Journal of Functional Analysis \textbf{267} (\textbf{7}) (2014), 2478-2506.

\bibitem[G-L]{G-L} R. Gaburro and W. R. B. Lionheart, Recovering Riemannian metrics in monotone families from boundary data, Inverse Problems \textbf{25} (\textbf{4}) (2009).

\bibitem[G-S]{G-S} R. Gaburro and E. Sincich, Lipschitz stability for the inverse conductivity problem for a conformal class of anisotropic conductivities, Inverse Problems \textbf{31} 015008 (2015).

\bibitem[G-L-U1]{G-L-U1} A. Greenleaf,  M. Lassas, and G. Uhlmann, Anisotropic conductivities that cannot be detected by EIT, Physiological measurement 24.2 (2003): 413.

\bibitem[G-L-U2]{G-L-U2} A. Greenleaf,  M. Lassas, and G. Uhlmann, On nonuniqueness for Calder\'{o}n''s inverse problem,  Math. Res. Lett. 10 (2003)

\bibitem[Ha]{Ha} B. Haberman, Uniqueness in Calder\'{o}n's problem for conductivities with unbounded gradient, Communications in Mathematical Physics \textbf{340} (\textbf{2}) (2015), 639-659.

\bibitem[Ha-T]{Ha-T} B. Haberman and D. Tataru, Uniqueness in Calder\'{o}n''s problem with Lipschitz conductivities, Duke Math. J. \textbf{162}(\textbf{3}) (2013), 497-516.

\bibitem[He-W]{He-W} H. Heck, Jenn-Nan Wang, Stability estimates for the inverse boundary value problem by partial Cauchy data, Inverse Problems \textbf{22} (\textbf{5} ) (2006), 1787-1796.

\bibitem[Ho]{Ho} A. W. Hofmann, Diffusion in natural silicate melts: a critical review, Physics of magmatic processes (1980), 385-417.

\bibitem[I]{I} T. Inoue, Effect of water on melting phase relations and melt composition in the system Mg 2 SiO 4 MgSiO 3 H 2 O up to 15 GPa, Physics of the Earth and Planetary Interiors \textbf{85} (\textbf{3}) (1994), 237-263.

\bibitem[Is]{Is} V. Isakov, On the uniqueness in the inverse conductivity problem with local data, Inverse Problems and Imaging \textbf{1}, (\textbf{1})  (2007), 95-105.

\bibitem[K-Y]{K-Y} H. Kang and K. Yun, Boundary determination of conductivities and Riemannian metrics via local Dirichlet-to-Neumman operator, SIAM J. Math. Anal., Vol.\textbf{34}, No. \textbf{3} (2002), 719-735.

\bibitem[Ka]{Ka} S. Karato, Water distribution across the mantle transition zone and its implications for global material circulation, Earth and Planetary Science Letters \textbf{301} (2011), 413-423.

\bibitem[Ka-W]{Ka-W} S. Karato and D. Wang, Electrical conductivity of minerals and rocks. To be published in Physics and Chemistry of the Deep Earth (edited by S. Karato) Wiley-Blackwell.

\bibitem[Ka-J]{Ka-J} S. Karato and H. Jung, Effects of pressure on high-temperature dislocation creep in olivine, Philosophical Magazine \textbf{83}.\textbf{3} (2003), 401-414.

\bibitem[Ke-P]{Ke-P} C. E. Kenig and J. Pipher, The Neumann problem for elliptic equations with nonsmooth coefficients. Invent. Math. \textbf{113} (\textbf{3}) (1993), 447-509.

\bibitem[Ke-S-U]{Ke-S-U} C. Kenig, J. Sj\"ostrand and G. Uhlmann, The Calder\'on problem with partial data, Ann. Math. \textbf{165} (\textbf{2}) (2007), 567-591.

\bibitem[Ken-H]{Ken-H} W. D.  Kennedy and D. C. Herrick, Conductivity anisotropy in shale-free sandstone, Petrophysics, \textbf{45} (\textbf{1}) (2004), 38-58.

\bibitem[Ko-V1]{Ko-V1} R. Kohn and M. Vogelius, Identification of an unknown conductivity by means of measurements at the boundary, SIAM-AMS Proc. \textbf{14} (1984), 113-123.

\bibitem[Ko-V2]{Ko-V2} R. Kohn and M. Vogelius, Determining conductivity by boundary measurements II. Interior Results, Comm. Pure Appl. Math.\textbf{38} (1985), 643-667.

\bibitem[Ku-Syo-Ak]{Ku-Syo-Ak} I. Kushiro, Y. Syono, and S. I. Akimoto, Melting of a peridotite nodule at high pressures and high water pressures, Journal of Geophysical Research \textbf{73} (\textbf{18}) (1968), 6023-6029.

\bibitem[L]{L} W. R. B. Lionheart, Conformal uniqueness results in anisotropic electrical impedance imaging, Inverse Problems \textbf{13} (1997), 125-134.

\bibitem[La-U]{La-U} M. Lassas and G. Uhlmann, On determining a Riemannian manifold from the Dirichlet-to-Neumann map, Ann. Sci. \'Ecole Norm. Sup. \textbf{34} (2001), 771-787.

\bibitem[L-U]{L-U} J. M. Lee and G. Uhlmann, Determining anisotropic real-analytic conductivities by boundary measurements, Comm. Pure Appl. Math. \textbf{42} (1989), 1097-1112.

\bibitem[La-U-T]{La-U-T} M. Lassas, G. Uhlmann and M. Taylor, The Dirichlet-to-Neumann map for complete Riemannian manifolds with boundary,  Comm. Anal. Geom.  \textbf{11} (\textbf{2}) (2003), 207-221.






\bibitem[M-Do]{M-Do} Maillet, R. and Doll, H.G.. Sur un th\'{e}or\`{e}me relatif aux milieu \'{e}lectriquement anisotropes et ses applications \`{a} la prospection \'{e}lectrique en courant continu. Erg\"anzungshefte f\"ur Angewandte Geophysik \textbf{3} (1932), 109-124.

\bibitem[Mi]{Mi} C. Miranda, Partial differential equations of elliptic type, second ed., Springer, Berlin, 1970.

\bibitem[Mit-T]{Mit-T} M. Mitrea and M. Taylor, Potential theory in Lipschitz domanis in Riemannian manifolds: H\"older continuous metric tensors, Commun. in Partial Differential Equations \textbf{25} (\textbf{7}\& \textbf{8}) (2000), 1487-1536.

\bibitem[N]{N} A. Nachman, Global uniqueness for a two dimensional inverse boundary value problem, Ann. Math. \textbf{142} (1995), 71-96.

\bibitem[NaT1]{NaT1} G. Nakamura and K. Tanuma, Local determination of conductivity at the boundary from the Dirichlet-to-Neumann map, Inverse Problems \textbf{17} (2001), 405-419.

\bibitem[NaT2]{NaT2} G. Nakamura and K. Tanuma, Direct determination of the derivatives of conductivity at the boundary from the localized Dirichlet to Neumann map, Comm. Korean Math. Soc, \textbf{16} (2001), 415-425.

\bibitem[NaT3]{NaT3} G. Nakamura and K. Tanuma, Formulas for reconstructing conductivity and its normal derivative at the boundary from the localized Dirichlet to Neumann map, Proceeding, International Conference on Inverse Problem-Recent Development in Theories and Numerics, (Ed. Y-C. Hon, M. Yamamoto, J. Cheng, J-Y. Lee) (2003), World Scientific, 192-201.

\bibitem[Ne-S]{Ne-S} J.G. Negi and P. D.  Saraf, Anisotropy in geoelectromagnetism, Elsevier, New York, 1989.

\bibitem[No]{No} L. J. North, C. Couves, A. I. Best and J. Sothcott, Relationships between resistivity anisotropy and microstructure of volcanic reservoir rocks, 76th EAGE Conference and Exhibition 2014. 2014.

\bibitem[S-P-Lo]{S-P-Lo} W. K. Sawyer, C. I. Pierce and R. B. Lowe. Electrical and hydraulic flow properties of Appalachian petroleum reservoir rocks. No. BM-RI-7519. Bureau of Mines, Morgantown, W. Va. (USA). Morgantown Energy Research Center, 1971.

\bibitem[Sc]{Sc}  C. Schlumberger, Etude sur la prospection \'{e}lectrique du sous-sol.  Gauthier Villars, Paris, France. 1920.

\bibitem[Si-T]{Si-T} F. Simpson and A. Tommasi, Hydrogen diffusivity and electrical anisotropy of a peridotite mantle, Geophysical Journal International 160.3 (2005), 1092-1102.

\bibitem[Sy]{Sy} J. Sylvester, An anisotropic inverse boundary value problem, Comm. Pure. Appl. Math. \textbf{43} (1990), 201-232.

\bibitem[Sy-U]{Sy-U} J. Sylvester and G. Uhlmann, A global uniqueness theorem for an inverse boundary value problem, Ann. of Math. \textbf{125} (1987), 153-169.


\bibitem[U]{U} G. Uhlmann, Electrical impedance tomography and Calder\'{o}n's problem (topical review), Inverse Problems \textbf{25} (\textbf{12}) (2009), 123011 doi:10.1088/0266-5611/25/12/123011.

\bibitem[Wo]{Wo} P. F. Worthington, The influence of formation anisotropy upon resistivity-porosity relationships, Transactions of the SPWLA 22nd Annual Logging Sumposium, Mexico City, Mexico, June 23-26 (1981), paper AA.

\bibitem[Y-We]{Y-We} C. Yin and P. Weidelt, Geoelectrical fields in a layered earth with arbitrary anisotropy, Geophysics, \textbf{64} (\textbf{2}) (1999), 426€-434.


Katayama,
I.,
2011.
Electrical

anisotropy
of
deformed
talc
 rocks
 and
 serpentinites
at
the
Earth
and
Planetary
Interiors,
in
press.



\end{thebibliography}
\end{document}